\numberwithin{equation}{section}
\renewcommand\d{\partial}
\renewcommand\a{\alpha}
\renewcommand\b{\beta}
\renewcommand\o{\omega}
\newcommand\s{\sigma}
\renewcommand\t{\tau}
\newcommand\R{\mathbb R}\newcommand\N{\mathbb N}\newcommand\Z{\mathbb Z}
\newcommand\C{\mathbb C}
\def\g{\gamma}
\def\t{\tau}
\def\th{\theta}
\def\l{\lambda}
\def\epsilon{\varepsilon}
\def\e{\varepsilon}
\def\lxi{\langle \xi \rangle}
\newcommand\br{\begin{rem}}
\newcommand\er{\end{rem}}
\newcommand\bp{\begin{pmatrix}}
\newcommand\ep{\end{pmatrix}}
\newcommand\be{\begin{equation}}
\newcommand\ee{\end{equation}}
\newcommand\ba{\begin{equation}\begin{aligned}}
\newcommand\ea{\end{aligned}\end{equation}}
\newcommand\nn{\nonumber}
\newcommand{\Id}{{\rm Id }}
\newcommand{\tr}{{\rm tr }}
\newtheorem{defi}{Definition}[section]
\newtheorem{theo}[defi]{Theorem}
\newtheorem{prop}[defi]{Proposition}
\newtheorem{lem}[defi]{Lemma}
\newtheorem{rem}[defi]{Remark}
\newtheorem{ass}[defi]{Assumption}
\numberwithin{equation}{section}
\begin{document}

\title{Partially strong transparency conditions and a singular localization method in geometric optics}

\author{Yong Lu\footnote{Mathematical Institute, Faculty of Mathematics and Physics, Charles University, Sokolovsk\'a 83, 186 75 Praha, Czech Republic, {\tt luyong@karlin.mff.cuni.cz}}
\,\,and Zhifei Zhang\footnote{School of Mathematical Science, Peking University, Beijing 100871, P. R. CHINA, {\tt zfzhang@math.pku.edu.cn}}  }

\date{}

\maketitle

\begin{abstract}
This paper focuses on the stability analysis of WKB approximate solutions in geometric optics with the absence of strong transparency conditions under the terminology of Joly, M\'etivier and Rauch.  We introduce a compatible condition and a singular localization method which allows us to prove the stability of WKB solutions over long time intervals. This compatible condition is weaker than the strong transparency condition. The singular localization method allows us to do delicate analysis near resonances. As an application, we show the long time approximation of Klein-Gordon equations by Schr\"odinger equations in the non-relativistic limit regime.

\end{abstract}

{\bf Keywords}:

Transparency condition, stability, WKB solution, singular localization method.

 \tableofcontents

\section{Introduction}\label{sec:intr}

In this paper, we consider the long time behavior of the solutions to Cauchy problems for symmetric hyperbolic systems of the following form
\be\label{00}
\left\{\begin{aligned}
&\d_t U+\frac{1}{\e}A(\d_x) U +\frac{1}{\e^2} A_0 U=B(U,U),\\
&U(0,\cdot)\in H^s(\R^d),
\end{aligned}\right.
\ee
where $U(t,x):\R_+\times \R^d \to \R^N$ is the unknown, $A(\d_x)=\sum_{j=1}^d A_j \d_{x_j}$ with $A_j,\ j=1,\cdots,d$ real-valued symmetric matrices, $A_0$ is a real-valued skew-symmetric matrix and $B(\cdot, \cdot): \R^N\times \R^N \to \R^N$ is a symmetric bilinear application.  The matrices $A_j$ are all of order $N\times N$. The initial datum $U(0)$ is supposed to be in Sobolev space $H^s$ with $s$ sufficiently large.

 \subsection{Setting and background}

 We will consider solutions of \eqref{00} having the from
\be\label{sol-form}
U(t,x)=e^{-i\o t/\e^2}U_{0,1}(t,x) + e^{i\o t/\e^2} \overline U_{0,1}(t,x)+O(\e),
\ee
which is highly oscillating in time with $\o$  an appropriate characteristic temporal frequency satisfying
\be\label{dis-rel}
{\rm det}\,(-i\o+A_0) =0,
\ee
which is the so called  {\em dispersion relation}; $\o$ is also called the temporal wave number.

The study of highly oscillating solutions to hyperbolic systems falls in the framework of {\em geometric optics}. Considerable progress has recently been made in this field, especially following the works of Joly, M\'etivier and Rauch in the nineties (see for instance \cite{JMR0,DJMR, JMR1, JMR2,CD-Duke}, and \cite{Dumas1} for an overview and references therein). In geometric optics, the main issue is the stability of a family of approximate solutions, namely \emph{WKB solutions}, and the main obstacle is the resonance. 

The hyperbolic system in \eqref{00} is symmetric semilinear. Then with $H^s,\ s>d/2$ initial data, the local well-posedness is classical (see \cite{Ma} or \cite{Me}). In spite of the presence of the large prefactors $1/\e$ and $1/\e^2$, the uniform $H^s$ estimate still holds due to the symmetry of $A_j$ and the skew-symmetry of $A_0$. Hence, with initial data that are uniformly bounded in $H^s, \ s>d/2$, the classical existence time to Cauchy problem \eqref{00} is $O(1)$.

Our goal in this paper is to study the behavior of the solution to \eqref{00} beyond the classical time $O(1)$ up to long time of orders $O(1/\e^\g)$ for some $\g>0$ given $O(1)$ initial data. This study falls in the framework beyond the weakly nonlinear regime of geometric optics, thus the classical results, for instance \cite{JMR0} -- geometric optics for $O(1)$ amplitude, but $O(1)$ time, as well as \cite{JMR1} -- diffractive optics for $O(1/\e)$ time, but $O(\e)$ amplitude, do not apply.  By assuming the  global-in-time (or long time)  existence of approximate solutions, we exhibit some sufficient conditions on \eqref{00},  and introduce a singular localization method which allows us to make use of such sufficient conditions to show the existence as well as the stability of solutions over long time intervals. Such sufficient conditions are described in Section \ref{sec:ass-res}, in particular in the key Assumption \ref{ass-ps-tran}. The singular localization method is introduced and described in Section \ref{sec:pf-1st} and Section \ref{sec:pf-2nd}.

As an application,  we show in Section \ref{sec:motivation} that in the non-relativistic limit regime the quadratic Klein-Gordon equation can be well approximated by linear Schr\"odinger equations over long time intervals of order $O(1/\e)$. 

\medskip

We point out that the condition imposed in the key Assumption \ref{ass-ps-tran} is analogous to, but weaker and more general than, the {\em strong transparency condition} exhibited by Joly, M\'etivier and Rauch in \cite{JMR2}. The strong transparency condition allows a control of the constructive interaction of characteristic waves at the resonances by a normal form reduction, thus leading to the stability of approximate WKB solutions. The transparency condition is analogous to the null conditions introduced by Klainerman in \cite{kl}; the normal form reduction allowed by the transparency property is analogous to the analysis of Shatah in \cite{shatah}.  As it will be shown in Section \ref{sec:very-KG},  the quadratic Klein-Gordon equation satisfies the condition imposed in Assumption \ref{ass-ps-tran} while it does not satisfy the strong transparency condition.

\label{page-em2}We also point out that the {\em approximate linear transparency condition} introduced in \cite[Assumption 1.7]{em2}, which is also weaker than the strong transparency condition, has similarities with our setting. To be precise,  the condition in \cite[Assumption 1.7]{em2} can be recovered by taking $\a=1/2$ in our Assumption \ref{ass-ps-tran}. We remark that in our setting, $\e$ corresponds to $\sqrt\e$ in \cite{em2}. Moreover, the idea to decompose the integral form into two parts (see page 31 in \cite{em2}), where one part is the integral over a neighbourhood of resonances $D^\e:=\{\eta'\in \R^d: \ |\psi^\e(\e \eta')\leq \sqrt\e|\}$ and the other part is the integral over the complement of $D^\e$,  is essentially of the same sprit as our singular localization method.  However, the analysis here is not simply a generalization of the argument in \cite{em2} from $\a=1/2$ to general $\a>0$. In particular, the singular localization method used in this paper can be employed to deal with the Klein-Gordon-wave equations (1.10) and (1.11) studied in \cite{em2} and to show the same stability results.  But the analysis in \cite{em2}  strongly relies on the typical structure of the system which we do not assume in this paper (see equation (1.9)). In particular, the block diagonal structure of the differential operator and the special coupling structure of the nonlinear terms play a crucial role for the stability analysis argument in \cite{em2}.

\medskip

The strong transparency condition ensures the stability of WKB solutions.\label{page-case-nt} However, many (most) physical models in geometric optics do not fulfill the strong transparency condition, such as the Euler-Maxwell system, the Klein-Gordon system, the Maxwell-Landau-Lifshitz system, the Klein-Gordon-Zakhorov system, etc.. Thus, the study of the case where the strong transparency condition is not satisfied is highly important. In \cite{em4}, Texier and the first author gave a systematic study  concerning the case with the absence of the strong transparency condition.  In particular, the article \cite{em4} contains a detailed account of how resonances may destabilize the WKB solutions. There was exhibited an almost sufficient and necessary condition for the stability of WKB solutions by giving a scalar index $\Gamma$ of which the positivity ensures instability and the negativity ensures stability.

However, the case $\Gamma=0$ is not included in the study of \cite{em4}. The case considered in this paper corresponds to a large family of subcases of the case $\Gamma=0$. Even if the scaling in this paper is different from the one in \cite{em4}, the result obtained, as well as the method used in this paper could give some clear clues for the study in the scaling of \cite{em4} and others.

\subsection{Assumptions and  main results}\label{sec:ass-res}

In this section, we state our main assumptions and results.

 \subsubsection{Smooth spectral decomposition}

 We first assume the symbol of the differential operator on the left-hand side of \eqref{00} admits a smooth spectral decomposition:
\begin{ass}\label{ass-spec}
We assume that the spectral decomposition
$$
A(\xi)+A_0/i=\sum_{j=1}^J \l_j(\xi)\Pi_j(\xi)
$$
is smooth, meaning that the eigenvalues $\l_j(\xi)$ and  the eigenprojectors $\Pi_j(\xi)$ are smooth in $\xi\in \R^d$. Moreover, for any $1\leq j\leq J$, we suppose that $\l_j(\cdot)$ and $\Pi_j(\cdot)$ are in the classical symbol class $S^1$ and $S^0$, respectively.

\end{ass}

The definition of the symbol classes $S^m$ is classical and will be recalled  in Section \ref{sec:semi-Fourier}.

\subsubsection{WKB solutions}\label{sec:ass-app}

By WKB (approximate) solutions of \eqref{00} we mean truncated power series in $\e$, where each term in the series is a trigonometric polynomial in $\th:=-\o t/\e^2,$ that approximately solves \eqref{00}.
Precisely,  a WKB solution $U_a$ has the form
 \be\label{suite}U_a(t,x) = \sum_{n=0}^{K_a+1} \e^{n} {\bf U}_{n}(t,x,\th), \quad {\bf U}_{n} (t,x,\th)= \sum_{p \in {\cal H}_n} e^{i p \th} U_{n,p}(t,x), \  K_a \in \Z_+, \,\, {\cal H}_n \subset \Z,\ee
which solves
\be \label{wkb-new1}
 \left\{ \begin{aligned} &\d_t U_a + \frac{1}{\e} A(\d_x) U_a +\frac{1}{\e^2} A_0 U_a   = B(U_a ,U_a ) - \e^{K_a} R^\e,\\
 & U_a (0,x)=U(0,x) - \e^K \psi^\e(x)\end{aligned}\right.
 \ee
 with $(R^\e,\psi^\e)$  bounded uniformly in $\e$ in some Sobolev spaces. Parameters $K_a$ and $K$ describe the level of precision of the WKB solution $U_a .$ Here ${\cal H}_n$ are the harmonics sets. In particular, in this paper, the leading harmonics set is defined as
 \be\label{def-H0}
 {\cal H}_0:=\{ -1,1 \}\subset  {\mathcal R}:=\big\{p: {\rm det}\,(-ip\o+A_0)=0 \big\}.\nn
 \ee

The idea to find or construct such a WKB solution is quite straightforward, that is to plug a solution $U_a$ of the form \eqref{suite} into the system \eqref{00}, and then consider the equations at each order $\e^n, \ n=-2,-1,\cdots.$ If one can solve the equations of order $\e^n$ up to some positive order $N_a$, then one can solve the original system \eqref{00} approximately, up to a small remainder of order $O(\e^{N_a+1})$.

\medskip

In this paper, we assume that there exits a global-in-time approximate solution for \eqref{00}.
\begin{ass}\label{ass-app} Let $s>d/2$. We assume the vector space $\ker(-i\o +A_0)$ is of dimension one with $e_1$ a generator of norm one.  We assume there exists $U_a\in C_b \left([0,\infty); H^{s+1}\right)\cap C_b^1 \left([0,\infty); H^{s}\right)$ solving \eqref{wkb-new1}  for all $(t,x)\in (0,\infty)\times \R^d$ with $K_a=2,~K=1$, and there holds the estimate
\be\label{bound-R-Psi}
\sup_{0<\e<1} \left(\|R^{\e}\|_{L^\infty\left(0,\infty; H^{s}\right)}+\|\psi^{\e}\|_{ H^{s}}\right)<+\infty.
\ee
Moreover, $U_a$ is of the form \eqref{suite} with $U_{n}\in C_b \left([0,\infty); H^{s+1}\right)\cap C_b^1 \left([0,\infty); H^{s}\right),~0\leq n\leq K_a+1=3$; in particular, the leading term $U_0$ is of the form
\be\label{ass-u0}
 U_0=e^{-i\o t/\e^2} U_{0,1}+e^{i \o t/\e^2} U_{0,-1},
 \ee
where
\be\label{ass-U01} U_{0,1}(t,x)=g_1(t,x) e_{1}, \quad U_{0,-1}(t,x)=g_{-1} (t,x) e_{-1}, \quad g_{-1}:=\bar g_1, \ e_{-1}=\bar e_1
\ee
for some scalar function $g_1\in C_b \left([0,\infty); H^{s+1}\right)\cap C_b^1 \left([0,\infty); H^{s}\right)$.
\end{ass}

The notation $\bar a$ stands for the complex conjugate of $a$.

\begin{rem}\label{rem-assapp1}
To obtain our main result Theorem \ref{thm:general}, the existence time and uniform bound for $U_a$ in Assumption \ref{ass-app} can be generalized to $U_a\in C_b \left([0,\frac{T}{\e}]; H^{s+1}\right)\cap C_b^1 \left([0,\frac{T}{\e}]; H^{s}\right)$ satisfying the uniform estimate
\be\label{uniest-ua}
\|U_n\|_{L^\infty\left([0,\frac{T}{\e}]; H^{s+1}\right)}+\|\d_t U_n\|_{L^\infty\left([0,\frac{T}{\e}]; H^{s}\right)}\leq C<\infty \nn
\ee
for some constant $C$ independent of $\e$  and some time $T>0$ independent of $\e$.
\end{rem}

The local-in-time WKB solutions to \eqref{00} can be constructed by using standard WKB expansion under the constrain \eqref{ass2-cond20} given later on. The main point of Assumption \ref{ass-app} is the global-in-time (or long time) existence and global-in-time (or long time) uniform bounds for the approximate solutions.

In the sequel of this section, we impose some compatibility conditions which ensure the existence of global-in-time approximate solutions such that Assumption  \ref{ass-app} is satisfied.

\medskip

{\bf Condition 1:} The leading terms of the initial data satisfy:
$$
U(0)=U_{0,1}(0,x)+\overline U_{0,1}(0,x)+O(\e) \ \mbox{in} \ H^s,\quad U_{0,1}(0,x)\in \ker (-i\o+A_0).
$$
This is often called the {\em polarization condition.}

\medskip

Let $\pi_p$ be the orthogonal projection onto $\ker (-ip\o+A_0)$ and $L_p^{-1}$ be the (partial) inverse of $L_p:=(-ip\o+A_0)$ such that
\be\label{ass2-cond1}
\pi_p L_p^{-1}=L_p^{-1}\pi_p=0,\quad  L_p L_p^{-1}=L_p^{-1}L_p=\Id-\pi_p. \nn
\ee

{\bf Condition 2:}
We suppose for any $p\in \Z$ and any $\xi\in \R^d$ there holds
\be\label{ass2-cond20}
\pi_p A(\xi) \pi_p =0.
\ee

\medskip

{\bf Condition 3:}
We suppose for any $p\in \Z$  there holds
\be\label{ass2-cond2}
 \pi_p \sum_{p_1+p_2=p}B(\pi_{p_1},\pi_{p_2})=0.
\ee

\medskip

{\bf Condition 4:} We suppose furthermore for any $p\in \Z$ and any $\xi\in \R^d$ that
\ba\label{ass2-cond3}
&\pi_p A(\xi)L_p^{-1} A(\xi)L_p^{-1} A(\xi)\pi_p =0,\\
&\pi_p A(\xi) L_{p}^{-1} \sum_{p_1+p_2=p}B(\pi_{p_1},\pi_{p_2})+2 \pi_p  \sum_{p_1+p_2=p}B(\pi_{p_1}, L_{p_2}^{-1}A(\xi)\pi_{p_2})= 0.\nn
\ea

We then have

\begin{prop}\label{prop:Ua}

Assumption \ref{ass-app} holds true if {\bf Condition 1}, {\bf Condition 2}, {\bf Condition 3} and {\bf Condition 4} are all satisfied. More precisely, we have

\begin{itemize}
\item[(i).] Under {\bf Condition 1} and {\bf Condition 2} and the additional assumption:
\ba\label{con-wkb1}
\ker (-ip\o+A_0) = \{0\}\ \mbox{for any $p$ satisfying $|p|\geq 2$};\quad
\pi_0 B(\pi_{1},\pi_{-1}) =0,
\ea
one can construct a uniformly bounded local-in-time WKB solution $U_a$ solving \eqref{wkb-new1} with arbitrary $K_a$ and $ K$.

 \item[(ii).]   Under {\bf Condition 1}, {\bf Condition 2} and {\bf Condition 3}, one can construct a uniformly bounded global-in-time WKB solution $U_a$ solving \eqref{wkb-new1} with $K_a=K=1$.

 \item[(iii).]   Under {\bf Condition 1}, {\bf Condition 2}, {\bf Condition 3} and {\bf Condition 4},  one can construct a uniformly bounded global-in-time WKB solution $U_a$ solving \eqref{wkb-new1} with $K_a=2,\ K=1$.

 \end{itemize}

\end{prop}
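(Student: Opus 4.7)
The plan is to run the formal WKB cascade. Substituting $U_a = \sum_{n=0}^{K_a+1}\e^n \sum_{p\in\mathcal{H}_n}e^{ip\theta}U_{n,p}$ into \eqref{00} and using $\d_t e^{ip\theta} = -ip\omega \e^{-2} e^{ip\theta}$, equating coefficients of $\e^n e^{ip\theta}$ will yield, for each $n\ge -2$ and each $p\in\ZZ$,
\[
L_p U_{n+2,p} + A(\d_x) U_{n+1,p} + \d_t U_{n,p} = \sum_{\substack{n_1+n_2=n\\ p_1+p_2=p}} B(U_{n_1,p_1}, U_{n_2,p_2}),
\]
with $U_{m,p}=0$ for $m<0$. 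I will split $\Id = \pi_p + (\Id-\pi_p)$: the $(\Id-\pi_p)$-component of $U_{n+2,p}$ will be algebraically determined by $L_p^{-1}$, while the $\pi_p$-component is a free unknown whose evolution is fixed two orders later by the solvability condition obtained from projecting with $\pi_p$.

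For (i), the $\e^{-2}$-equation will force $U_{0,p}\in \ker L_p$, reduced by \eqref{con-wkb1} to $p\in\{\pm 1\}$, and Condition 2 will make the $\e^{-1}$-solvability condition $\pi_p A(\d_x)\pi_p U_{0,p}=0$ automatic. The $\e^0$-solvability condition then yields
\[
\d_t\pi_p U_{0,p} - \pi_p A(\d_x)L_p^{-1} A(\d_x)\pi_p U_{0,p} = \pi_p \sum_{p_1+p_2=p} B(U_{0,p_1},U_{0,p_2}).
\]
For $p=\pm 1$ the source is empty (no two harmonics in $\{\pm 1\}$ sum to $\pm 1$); for $p=0$ the second half of \eqref{con-wkb1} eliminates the remaining source $2\pi_0 B(U_{0,1},U_{0,-1})$. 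Writing $U_{0,1}=g_1 e_1$ then reduces the system to a scalar linear Schr\"odinger equation for $g_1$, solvable globally. Higher correctors $U_{n,p}$ will satisfy linear inhomogeneous equations of the same shape, so the cascade can be iterated to any $K_a$, producing a local-in-time WKB solution (the existence time being set by the $H^{s+1}$-regularity of the inhomogeneities).

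For (ii), Condition 3 (applied at each $p$) exactly cancels the quadratic source $\pi_p\sum B(U_{0,p_1},U_{0,p_2})$ in the evolution equation above, since each $U_{0,p_i}=\pi_{p_i}U_{0,p_i}$. What remains is a linear dispersive system for $\{\pi_p U_{0,p}\}_p$ posed on the finite collection of polarization subspaces $\ker L_p$, globally well-posed in $H^{s+1}$; the first corrector $U_1$ then solves linear driven equations globally, giving $K_a=K=1$. For (iii), I will next derive the order-$\e^1$ solvability condition on $\pi_p U_{1,p}$: expanding $U_{2,p}$ through $L_p^{-1}$ in terms of the already-determined $U_0, U_1$, one gets, besides terms linear in $U_1$, exactly two obstructions to linearity --- the cubic-in-$A(\d_x)$ term $\pi_p A(\d_x)L_p^{-1}A(\d_x)L_p^{-1}A(\d_x)\pi_p U_{0,p}$ and the bilinear expression $\pi_p A(\d_x)L_p^{-1}\sum B(\pi_{p_1}U_{0,p_1},\pi_{p_2}U_{0,p_2}) + 2\pi_p\sum B(\pi_{p_1}U_{0,p_1},L_{p_2}^{-1}A(\d_x)\pi_{p_2}U_{0,p_2})$. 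The two identities comprising Condition 4 are crafted exactly to kill these, leaving a linear equation for $\pi_p U_{1,p}$, globally solvable; $U_2,U_3$ are then determined by automatically-linear higher cascade equations. The main obstacle will be the combinatorial bookkeeping in (iii): enumerating every contribution to the order-$\e^1$ solvability condition, tracking how the $(\Id-\pi_p)$-components of $U_0,U_1$ re-enter via $L_p^{-1}$, and verifying that the two lines of Condition 4 capture precisely the terms obstructing linearity of $\pi_p U_{1,p}$. Once this is checked, the uniform bound \eqref{bound-R-Psi} on $R^\e$ and $\psi^\e$ follows from the $\e$-independence of the profiles $U_n$ produced by the cascade.
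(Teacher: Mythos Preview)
Your proposal is correct and follows the same approach as the paper: the paper explicitly omits the proof, stating it ``can be done by employing the standard WKB expansion for which we give a detailed description in Section~\ref{sec:wkb-sol}'' (the Klein--Gordon example), and your cascade---splitting each order into its $\pi_p$ and $(\Id-\pi_p)$ parts, reading off the Schr\"odinger-type profile equation at order $\e^0$, and using Conditions~3--4 to kill the nonlinear obstructions at orders $\e^0$ and $\e^1$---is exactly that procedure carried out in the abstract setting. One small addendum worth recording when you write it up: in part~(iii) the residual source term $\pi_p A(\d_x)L_p^{-1}\d_t(\pi_p U_{0,p})$ that appears in the $\pi_p U_{1,p}$ equation actually vanishes identically because $L_p^{-1}\pi_p=0$, which is why (as in the paper's example) one may take $\pi_p U_{1,p}\equiv 0$ and obtain genuinely uniform-in-time bounds rather than merely linear growth.
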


The proof of Proposition \ref{prop:Ua} can be done by employing the standard WKB expansion for which we give a detailed description in Section \ref{sec:wkb-sol}. The WKB expansion in Section \ref{sec:wkb-sol} is done for a specific example instead of the general case, but the procedure is essentially the same. So here we omit the proof of Proposition \ref{prop:Ua}. We point out that in statements (ii) and (iii) in Proposition \ref{prop:Ua}, we do not need to assume the additional assumption \eqref{con-wkb1} to make sure the leading term of the approximate solution has the form \eqref{ass-u0}. Indeed, {\bf Condition 3}, together with {\bf Condition 1}, allows us to choose trivial solutions $U_{0,p}\equiv 0$ for any $p\not\in \{-1,1\}$ in the WKB expansion.
\medskip

Finally we give a remark concerning the conditions exhibited above.
\begin{rem}\label{rem-assapp3}
Concerning condition \eqref{ass2-cond20}, it was shown in \cite{Lax1, DJMR} (see also \cite[Proposition 2.6]{T-st} a unified proof for such algebraic lemmas) that for any $\xi\in \R^d$, there holds
\be\label{ast-alg}
\pi_p A(\xi) \pi_p = \nabla_\xi \l_{j_{p}}(0)\cdot \xi,
\ee
where $\l_{j_{p}}$ is the eigenmode in Assumption \ref{ass-spec} such that $\l_{j_{p}}(0)=-p\o$. Thus, condition \eqref{ass2-cond20} means $\nabla_\xi \l_{j_{p}}(0)=0$. This associates with the condition in Assumption 1.5 in \cite{em2} saying that $(-p\o,0)$ is a local extremum of every branch of the characteristic defined in \eqref{char0}.

The condition in \eqref{ass2-cond2} corresponds exactly to the weak transparency condition introduced by Joly, M\'etivier and Rauch in \cite{JMR2}. See also \eqref{wtrans0} for the precise description.

\end{rem}

\subsubsection{Partially strong transparency}
Now we give our key assumption:
\begin{ass}\label{ass-ps-tran} For any $p\in \{-1,1\}$ and any $1\leq j,j'\leq J $, there exists some constant $C$ and $0< \a_{j,j',p}\leq 1$ such that
\begin{equation}\label{sttrans-new}
\big|\Pi_j(\xi)B(e_p)\Pi_{j'} (\xi) \big|\leq C|\l_j(\xi)-\l_{j'}(\xi)-p\o|^{ \a_{j,j',p}},\quad \mbox{for all $\xi\in \R^d$}.
\end{equation}
The vectors $e_{1}$ and $e_{-1}$ are introduced in Assumption \ref{ass-app}. The linear operator $B(e_p)$ is defined as $B(e_p)V:=B(e_p,V)$ for any $V\in \C^N$.

Moreover, for any $p\in \{-1,1\}$ and any $1\leq j,j'\leq J $, the resonance set
\be\label{def-re-set0}
R_{j,j',p} : =\{\xi\in\R^d : \l_j(\xi) - \l_{j'}(\xi) -p \o =0\}
\ee
is compact. If $R_{j,j',p} = \emptyset$, there exits $c_{j,j',p}>0$ such that
\be\label{def-cjjp}
|\l_j(\xi) - \l_{j'}(\xi) -p \o | \geq c_{j,j,p} \quad \mbox{for all $\xi \in \R^d$}.
\ee
\end{ass}

Given Assumption \ref{ass-ps-tran}, we further define the exponent:
\be\label{def-al}
\a:=\min_{j,j',p}\a_{j,j',p}.
\ee

If $\a=1$, Assumption \ref{ass-ps-tran} becomes the strong transparency assumption (see \cite{JMR2} and Section \ref{sec:con-tra} later on). If $\a=1/2$, Assumption \ref{ass-ps-tran} implies \cite[Assumption 1.7]{em2}. Because of the presence of the fractional power $0<\a\leq 1$,  we may call such condition imposed in Assumption \ref{ass-ps-tran} as the {\em partially strong transparency condition}.

We will show that under Assumption \ref{ass-ps-tran}, the approximate solution assumed in Assumption \ref{ass-app} is stable up to time of order $O(1/\e^{\a})$. Now we give an additional assumption, which allows us to show the stability up to even longer time $t_\e$ which is of order
\be\label{te-order}
t_\e=O(1/\e^{2\a}), \ \mbox{if $\a\leq 1/2$}; \quad t_\e=O(1/\e), \ \mbox{if $\a\geq 1/2$}. \nn
\ee
\begin{ass}\label{ass-ps-tran-add} If for some $(j,j',p)$, the component $\a_{j,j',p}$ in \eqref{sttrans-new} cannot be chosen equal to 1, we assume that either $\l_{j}$ or $\l_{j'}$ is identically a constant.
\end{ass}

The case $\a_{j,j',p}<1$ corresponds to the case where the interaction coefficient $\Pi_j(\xi)B(e_p)\Pi_{j'} (\xi)$ is not strongly transparent. The nontransparent interaction coefficients (or the resonances) happen quite often between two eigenmodes involving a zero eigenmode. Thus,  Assumption \ref{ass-ps-tran-add} is natural in such a sense.

\subsubsection{Main result}
We are ready to state our main theorem:
\begin{theo}\label{thm:general} Let $s>d/2$ and $0<\e<\e_0$ with $\e_0$ sufficient small. Under Assumption \ref{ass-spec}, Assumption \ref{ass-app} and Assumption \ref{ass-ps-tran}, the Cauchy problem \eqref{00} admits a unique solution $U\in L^\infty([0,\frac{T_1}{\e^\a}];H^s)$ for some $T_1>0$ independent of $\e$. Moreover, there holds the error estimate
\be\label{est-error1}
\|U-U_a\|_{L^\infty\left([0,\frac{T_1}{\e^\a}]; H^{s}\right)}\leq C\, \e.
\ee

\smallskip

If in addition Assumption \ref{ass-ps-tran-add} is satisfied, the solution $U\in L^\infty([0,\frac{T_2}{\e^{ \a_1}}];H^s)$ where $T_2 > 0$ is independent of $\e$ and
\be\label{def-a1}
\a_1 := \min\{2\a,1\}.\nn
\ee
Moreover, there holds
\be\label{est-error2}
\|U-U_a\|_{L^\infty\left([0,\frac{T_2}{\e^{\a_1}}]; H^{s}\right)}\leq C\, \e.
\ee

Here $C$ is a constant independent of $\e$ and the number $\a$ is defined in \eqref{def-al}.

\end{theo}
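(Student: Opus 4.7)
The plan is to control $V := U - U_a$ via a bootstrap on $\|V\|_{H^s}$, by closing a Duhamel-type energy estimate whose only troublesome term is the linear source $2B(U_a, V)$, and by using a singular Fourier localization to convert the partially strong transparency of Assumption \ref{ass-ps-tran} into a gain of powers of $\e$. Subtracting \eqref{wkb-new1} from \eqref{00} with $K_a = 2$, $K = 1$ yields
\begin{equation*}
\d_t V + \frac{1}{\e} A(\d_x) V + \frac{1}{\e^2} A_0 V = 2 B(U_a, V) + B(V, V) + \e^2 R^\e, \quad V(0) = \e\,\psi^\e.
\end{equation*}
The initial data $\e\psi^\e$ and the forcing $\e^2 R^\e$ make $O(\e)$ contributions in $H^s$ over the target time intervals, consistent with the announced error bound, and $B(V,V)$ remains subdominant in a bootstrap with $\|V\|_{H^s}\lesssim \e$. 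Using Assumption \ref{ass-spec} I decompose $V = \sum_j \Pi_j(D) V$ and write Duhamel's formula mode by mode. After conjugating out each mode's propagator and retaining only the leading harmonics $U_{0,\pm 1}$ from \eqref{ass-u0}, the $(j,j',p)$ contribution to the Duhamel integrand carries an oscillating time factor of the form $\exp\!\big(i[\l_j(\xi)-\l_{j'}(\xi)-p\o]\,s/\e^2\big)$ in Fourier (up to a mild frequency rescaling and a convolution by $\hat g_p$), whose stationary locus is exactly the resonance set $R_{j,j',p}$ of \eqref{def-re-set0}.

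For each triple $(j,j',p)$ I would introduce a smooth cutoff $\chi^\e(\xi)$ supported in the $\e^\b$-neighbourhood of $R_{j,j',p}$ (which is compact by Assumption \ref{ass-ps-tran}), with $\b \in (0,1]$ to be optimized, and split the integrand as $\chi^\e + (1-\chi^\e)$. On the support of $\chi^\e$, Assumption \ref{ass-ps-tran} directly yields
\begin{equation*}
\big|\chi^\e(\xi)\,\Pi_j(\xi)\,B(e_p)\,\Pi_{j'}(\xi)\big| \lesssim \e^{\a_{j,j',p}\b} \leq \e^{\a\b},
\end{equation*}
so that the near-resonance contribution to the energy inequality is bounded by $C\e^{\a\b}\|V\|_{H^s}^2$. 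On the support of $1-\chi^\e$ the phase satisfies $|\l_j - \l_{j'} - p\o| \gtrsim \e^\b$, and I would integrate by parts once in time in the Duhamel integral (equivalently, apply a first normal form transformation); this produces boundary terms of order $\e^{2-\b}\|V\|_{H^s}$ together with interior terms in which $\d_t V$ is re-expressed by the equation itself, contributing an $O(\e^{-\b})\|V\|_{H^s}^2$ term to the energy inequality plus lower-order forcing from $\d_t U_a$, $R^\e$ and $B(V,V)$ that is controllable under the bootstrap. Choosing $\b = 1$ balances $\e^{\a\b} = \e^\a$ with the far-region terms; the dominant linear contribution to $\frac{d}{dt}\|V\|_{H^s}^2$ is then of size $\e^\a\|V\|_{H^s}^2$, and Gronwall gives $\|V\|_{H^s} \leq C\e$ on $[0, T_1/\e^\a]$, which closes the bootstrap and proves \eqref{est-error1}.

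For the longer time $1/\e^{\min(2\a,1)}$ under Assumption \ref{ass-ps-tran-add}, the key structural simplification is that whenever $\a_{j,j',p} < 1$ one of $\l_j$ or $\l_{j'}$ is a constant, so the corresponding propagator is the identity and the resonance phase depends on only one effective Fourier variable. This permits a more refined second-step analysis --- either a second integration by parts on the far-from-resonance piece or an iterated normal form --- whose clean phase structure prevents new resonant cross-couplings from appearing. Tracking the additional gain and re-balancing the localization scale yields an effective energy coefficient of size $\e^{\min(2\a,1)}$, after which Gronwall produces \eqref{est-error2} on $[0, T_2/\e^{\min(2\a,1)}]$.

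The main technical obstacle I expect is the pseudodifferential calculus at the singular scale $\e^\b$: the cutoff $\chi^\e$ belongs to a degenerate, $\e$-dependent symbol class, and its commutators with $\Pi_j(D)$, $A(D)$ and $A_0$ must be shown to produce only harmless remainders. The integration by parts in the far region also replaces the singular denominator $(\l_j - \l_{j'} - p\o)^{-1}$ by a Fourier multiplier whose symbol seminorms blow up like $\e^{-\b}$, and one has to check that its action on $H^s$ is uniformly controlled in $\e$. I expect these estimates to rely essentially on the compactness of each resonance set $R_{j,j',p}$ and on the $S^1$/$S^0$ regularity granted by Assumption \ref{ass-spec}. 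The iterated normal form needed to reach the improved time will be the most delicate step, as it must not re-introduce resonances at the upgraded scale.
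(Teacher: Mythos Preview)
Your overall architecture---singular localization near each resonance set plus a normal form (equivalently, time integration by parts in Duhamel) away from it---is precisely the paper's approach. But your far-region accounting is wrong in a way that prevents the argument from closing. The crucial point is that the partial transparency bound \eqref{sttrans-new} must be used on $\supp(1-\chi^\e)$ as well, not only on $\supp\chi^\e$. The normal-form (or IBP) multiplier on the far region is $(\l_j-\l_{j'}-p\o)^{-1}(1-\chi^\e)\,\Pi_j B(e_p)\Pi_{j'}$, and by \eqref{sttrans-new} its $L^\infty$ norm is bounded by $C\sup_{|\phi|\geq h_\e}|\phi|^{\a-1}=Ch_\e^{\a-1}$, \emph{not} $h_\e^{-1}$ (this is the paper's estimate \eqref{est-tilde-Mijp}). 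Using only the phase lower bound $|\phi|\geq\e^\b$ without the numerator bound gives the $O(\e^{-\b})$ you wrote, which cannot be balanced against $\e^{\a\b}$ at $\b=1$; your paragraph is internally inconsistent there. With the correct bound, the dominant far-region remainder arises from the commutator $[\l_{j'}(\e D_x),g_p]$---of size $\e$ by the semiclassical commutator Lemma~\ref{lem-comm}, and here the scaling $\Pi_j(\e D_x)$ rather than your $\Pi_j(D)$ is essential---multiplied by $h_\e^{\a-1}$, giving a linear term of order $\e h_\e^{\a-1}$. Balancing $h_\e^\a=\e h_\e^{\a-1}$ forces $h_\e=\e$ and yields the $\e^\a$ rate; see Propositions~\ref{prop:Bin} and~\ref{prop:M}.

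Your proposed mechanism for the longer time under Assumption~\ref{ass-ps-tran-add} is also not the right one. The paper does \emph{not} iterate the normal form or integrate by parts twice; a second IBP would reintroduce the $O(1)$ source $B(U_a,\cdot)$ through the time derivative of the interaction-picture unknown and gain nothing. Instead, the paper notes that when $\l_j$ (resp.\ $\l_{j'}$) is constant the commutator $[\l_j(\e D_x),g_p]$ (resp.\ $[\l_{j'}(\e D_x),g_p]$) is identically zero, and by placing $g_p$ and the cutoff on the appropriate side of the normal-form operator (the refined $Q$ of \eqref{def-Qjj}, with the matching placement of $\chi_{j,j',p}$ in \eqref{B-jjp2}--\eqref{B-jjp3}) one arranges that the only $[\l,\,g_p]$ commutators that appear are the ones that vanish. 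This kills the $\e h_\e^{\a-1}$ term outright; the next remainder is $\e^2\d_t Q=O(\e^2 h_\e^{\a-1})$, and balancing $h_\e^\a=\e^2 h_\e^{\a-1}$ gives $h_\e=\e^2$ and the rate $\e^{2\a}$, capped at $\e$ by the unavoidable $[\Pi_j(\e D_x),g_p]$ commutators. Incidentally, your worry about commutators with the singular cutoff is handled by the same device: $\chi^\e(\e D_x)$ is always positioned so that it never has to be commuted past $g_p$.
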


We remark that Theorem \ref{thm:general} shows a {\em linear stability} phenomenon.
\begin{rem}
Theorem \ref{thm:general} gives an existence and stability result beyond the classical existence time. In Assumption \ref{ass-app}, the initial difference between the exact solution and the approximate solution is of order $O(\e)$. The estimates \eqref{est-error1} and \eqref{est-error2} imply that the error stays of order $O(\e)$ over long time intervals considered. This means that the approximate solution is {\em linearly stable} over the corresponding long time intervals, in the sense that the initial error is not much amplified through the dynamics of the system.
\end{rem}

\subsection{Structure of the paper}\label{sec:tructure}

In Section \ref{context}, we introduce some context of our study in geometric optics and we also emphasize the novelty of the study in this paper. In Section \ref{sec:semi-Fourier}, we recall the concept of semiclassical Fourier multipliers and the action estimates including a commutator estimate.  Section \ref{sec:stability}, Section \ref{sec:pf-1st} and Section \ref{sec:pf-2nd} are devoted to the proof of Theorem \ref{thm:general}. In Section \ref{sec:motivation}, we give an application of our study in the non-relativistic limit problem of Klein-Gordon equations.

In the sequel, we use $C$ to denote a positive constant independent of $\e$. However, the value of $C$ may change from line to line.

\section{Transparency conditions and stabilities}\label{context}

 In this section, we first recall some basic concepts in geometric optics including transparency conditions exhibited in \cite{JMR2} by Joly, M\'etivier and Rauch,  and the normal form method used to obtain the stability of WKB approximate solutions. We then briefly recall the study in \cite{em4} and explain why the study of this paper is important for stability analysis in geometric optics, particularly in  completing the program of \cite{em4} and \cite{JMR2}.


\subsection{Weak transparency}\label{sec:con-wkb}

In Section \ref{sec:ass-app}, we gave the definition of WKB solutions and we assumed the existence of WKB solutions in Assumption \ref{ass-app}.  In \cite{JMR1,JMR2}, Joly, M\'etivier and Rauch exhibited the {\em weak transparency condition} that allows one to construct WKB approximate solutions. Before stating such weak transparency condition, we introduce some basic concepts.

We define the characteristic variety of the differential operator in \eqref{00}:
 \be\label{char0}
 {\rm Char}:=\{ (\t,\xi): {\rm det}\,\big(-i\t +A(i\xi)+A_0\big)=0\}.
 \ee
Given a couple $(\t,\xi)$, we denote by $\Pi(\t,\xi)$  the orthogonal projector onto
\be\label{char1}\ker \big(-i\t +A(i\xi)+A_0\big).\nn
\ee

 We fix a basic characteristic space-time vector
\be\label{def-beta}
\b:=(\o,k)\in \R\times \R^d\nn
\ee
satisfying the dispersion relation
$$
\det (-ip\o+A(ik)+A_0)=0,
$$
where $k$ is called the {\em spatial wave number} and $\o$ is called the {\em temporal wave number}.

 Remark that in this paper, the spatial wave number $k$ is assumed to be zero, since we are not considering solutions that are highly oscillating in spatial variable (see  \eqref{sol-form}). However, to introduce the general concepts concerning transparency conditions, we take general $\b=(\o,k)$.


Now we can state the \emph{weak transparency} condition introduce in \cite{JMR2}:\medskip

{\bf Weak transparency.} For any $p,p_1\in \Z$ and any $U,~V\in \C^N$, one has
\begin{equation}\label{wtrans0}
\big|\Pi(p_1 \b)B\big(\Pi((p_1-p) \b)U,\Pi(p \b)V\big)\big|=0.
\end{equation}

We find that this weak transparency condition corresponds to exactly  the condition introduced in (\ref{ass2-cond2}).

\subsection{Strong transparency and normal form method}\label{sec:con-tra}

Given a WKB solution, a nature question is the stability property of this WKB solution. To this issue, one turns to consider the perturbed system.  Let $U$ and $U_a$ be the exact solution and the WKB solution which solve \eqref{00}  and \eqref{wkb-new1} respectively. Then the perturbation
\be\label{def-dot-U0}
\dot U:=\frac{U-U_a}{\e}\nn
\ee
solves
 \be \label{eq-dotU0}
 \left\{ \begin{aligned} &\d_t \dot U + \frac{1}{\e} A(\d_x) \dot U +\frac{1}{\e^2} A_0 \dot U = 2B(U_a)\dot U+\e B(\dot U,\dot U) + \e^{K_a-1} R^\e,\\
 & \dot U(0,x)=\e^{K-1} \psi^\e(x).\end{aligned}\right.
 \ee

An advantage of considering the perturbed system \eqref{eq-dotU0} is that the nonlinear term is small of order $\e$. The leading term becomes the linear one $2B(U_a )\dot U$. However, even when the parameter $K_a$ and $K$ are sufficiently large and the WKB solution $U_a$ is uniformly bounded in proper Sobolev spaces and solves \eqref{wkb-new1} globally in time, the classical existence time $T_\e^*$ to \eqref{eq-dotU0} is at most of logarithmic order:
$$
\dot T_\e^* \geq T_0 |\ln \e|,\quad \mbox{ for some $T_0>0$ independent of $\e$}.
$$
This logarithmic order existence time can be achieved by employing the argument in \cite{Lannes-Rauch} as well as in \cite{CD_DS}.

To achieve an even larger scale of the maximal existence time such as
\be\label{dot-T-*}
\dot T_\e^* \geq \frac{T}{\e^\g},\quad \mbox{for some $T>0,\ \g>0$ independent of $\e$},\nn
\ee
as well as the uniform boundedness of the perturbation over such long time, one needs to make use of more structure of the system  \eqref{eq-dotU0}. To this end, also in \cite{JMR2}, Joly, M\'etivier and Rauch introduced the strong transparency condition that allows them to  eliminate the linear leading term $2B(U_a )\dot U$ up to a remainder of order $\e$ by using a normal form method. If this can be done,  the right-hand side of $\eqref{eq-dotU0}_1$ becomes of order $\e$ and the well-posedness over time of order $1/\e$ follows from the classical theory. We recall the strong transparency condition:\medskip

\label{JMR-srong-trans}{\bf Strong transparency.} There exists a constant $C$ such that for any $p\in \Z$,  $1\leq j,j'\leq J $, $\xi\in \R^d$ and  $U,~V\in \C^N$, one has
\begin{equation}\label{sttrans0}
\big|\Pi_j(\xi+p k)B\big(\Pi(p \b)U,\Pi_{j'} (\xi)V\big)\big|\leq C|\l_j(\xi+p k)-\l_{j'}(\xi)-p\o|\cdot|U|\cdot|V|.
\end{equation}

In the above inequality \eqref{sttrans0}, the terms $\Pi_j(\xi+p k)B\big(\Pi(p \b),\Pi_{j'} (\xi)\big)$ on the left-hand side are named {\em interaction coefficients}, and the factors $\l_j(\xi+p k)-\l_{j'}(\xi)-p\o$ on the right-hand side are called {\em interaction phases}. The frequencies $\xi$ such that $\l_j(\xi+p k)-\l_{j'}(\xi)-p\o=0$ are named {\em resonances} and the $(j,j',p)$-resonance set is defined as
\be\label{def-Rjjp}
R_{j,j',p}:=\{\xi\in\R^d,\,\l_j(\xi+p k)=p \o+\l_{j'}(\xi)\}.
\ee

The equalities $\l_j(\xi+p k)-\l_{j'}(\xi)-p\o=0$ are named {\rm resonance equations}.

The strong transparency condition offers a control of the quantity
$$
\frac{\Pi_j(\xi+p k)B\big(\Pi(p \b)U_{0,p},\Pi_{j'} (\xi)\big)}{\l_j(\xi+p k)-\l_{j'}(\xi)-p\o}
$$
which appears in the normal form reduction. The interaction phase plays the role of divisor.

\medskip

The method of a normal form reduction is essentially a change of unknown which can be linear or nonlinear. In general, the nonlinear normal form method needs more constrains on the structure of the equations than the linear one. In our setting, we are trying to eliminate the linear leading term, it is possible to use the linear version of the normal form method. The idea is to consider a change of unknown of the following form
\be\label{change1-int}
\dot U_1=\left(\Id +\e^2 M\right)^{-1} \dot U,\nn
\ee
with $M$  to be determined. Then the system in $\dot U_1$ is of the form
\ba\label{eq-dot-U1-int}
&\d_t \dot U_1  + \frac{i}{\e^2} \mathcal{A}(\e D_x) \dot  U_1 =\left(2B(U_a )-i[{\mathcal A}(\e D_x),M] \right)+\e \mathcal{R}_1,\nn
\ea
where $\e \mathcal{R}_1$ contains all the terms formally of order $O(\e)$ and
$$
D_x:={\d_x}/{i},\quad {\mathcal A}(\xi):= A(\xi) + A_0/i.
$$
The goal is to find a proper operator $M$ such that the $O(1)$ term on the right-hand side of \eqref{eq-dot-U1-int} is eliminated with a small remainder. It is shown in \cite{JMR2}, as well as in \cite{em3,MLL, em4, slow}, that
such $M$ can be well defined provided the strong transparency condition is satisfied, and the linear leading term can be eliminated with an $O(\e)$ remainder.

\medskip

Such strong transparency condition is satisfied for some physical models, such as the Maxwell-Bloch system (see \cite{JMR2}) and the one-dimensional Maxwell-Landau-Lifshitz system (see \cite{MLL}). Moreover, Texier showed that the Euler-Maxwell
equations satisfy a form of transparency \cite{em3}, Cheverry, Gu\`es and M\'etivier \cite{CGM} showed  that for systems of conservation laws, linear degeneracy of a field implies transparency. However, many (most) physical models in geometric optics do not fulfill the strong transparency condition, such as the Klein-Gordon system considered in Section \ref{sec:motivation} later on.

\smallskip

 In this paper, we impose a weaker condition in Assumption \ref{ass-ps-tran} compared to the strong transparency condition. A key novelty of our study is to extend the long time stability analysis in geometric optics under  Assumption \ref{ass-ps-tran} without assuming the strong transparency condition. Another novelty is to introduce a singular localization method,  which allows us to do delicate analysis for the interaction coefficients near resonances in order to obtain long time stability.

\subsection{Absence of strong transparency}\label{sec:con-not}

 In \cite{em4}, Texier and the first author give a systematic study for the case where the strong transparency condition fails to be satisfied for semilinear hyperbolic systems of the following form:
\be \label{eq-slow} \d_t U + \frac{1}{\e} A_0 U + \sum_{1 \leq j \leq d} A_j \d_{x_j} U = \frac{1}{\sqrt \e} B(U,U),\ee
where the constant matrix $A_0$ is non-zero and skew-symmetric and the matrices $A_j, \ 1\leq j\leq d$ are constant and symmetric. Highly oscillating initial data are considered:
\be \label{data-slow}
 U(0,x) = \Re e \, \big( a(x) e^{i k \cdot x/\e} \big) + \sqrt\e \varphi^\e(x).\nn
 \ee
Here $k$ is the spatial wave number. Let $\o$ be a temporal wave number satisfying the dispersion relation:
$$
{\rm det}\, (-i\o +A(ik)+A_0)=0.
$$

The absence of strong transparency means that there
exists $(j,j',p)$ such that \eqref{sttrans0} is not satisfied.
Denote $J_0$  the set containing all such indices $(j,j',p)$ and $R_{j,j',p}$ the $(j,j',p)$-resonant set defined as in \eqref{def-Rjjp}. If $R_{j,j',p}$ is empty, by the regularity of $\l_j$ and $\Pi_j$, $j=1,\cdots, J$, the strong transparency condition \eqref{sttrans0} is satisfied for the index
$(j,j',p)$. Then for any $(j,j',p)\in J_0$, $R_{j,j',p}$ is not
empty, and the following quantity is well defined:
\be\label{def:Gamma} \Gamma:=\sup_{(j,j',p)\in J_0}|g_p(0,x_{p})|^2\sup_{\xi\in
R_{j,j',p}} \tr\Big(\Pi_j(\xi+p k)B(\vec
e_p)\Pi_{j'}(\xi)B(\vec e_{-p})\Pi_j(\xi+p k)\Big), \nn\ee
where $g_p$ comes from the polarization condition
$$
U_{0,p}(t,x)= g_p(t,x)e_p,\quad e_p\in \ker (-ip\o +A(ik)+A_0),
$$
and $x_p$ is a point where $|g_p(0,\cdot)|$ admits its maximum. Here $U_{0,p}$ are the leading terms of the WKB solution. In \cite{em4}, it is shown that the stability  of  the WKB solution
is determined by the sign of $\Gamma$:\\
If  $\Gamma<0$, the perturbation  system is symmetrizable and the WKB solution is stable.\\
If $\Gamma>0$, it is shown that the WKB solution is unstable.

However, the degenerate case $\Gamma =0$ is not included in the study of \cite{em4}. In \cite{slow}, the first author considered a subcase of $\Gamma=0$, that is the case $g_p(0,x)=0$ for any $(j,j',p)\in J_0$. Under the assumptions $\d_t g_p(0,x)\neq 0$ and the positivity of the following quantity
\be\label{nt-slow}
\tilde\Gamma:=\sup_{(j,j',p)\in J_0}\sup_{\xi\in
R_{j,j',p}} \tr\Big(\Pi_j(\xi+p k)B(\vec
e_p)\Pi_{j'}(\xi)B(\vec e_{-p})\Pi_j(\xi+p k)\Big),\nn
\ee
the instability are discovered instantaneously, even though the equations linearized around the leading WKB terms are initially stable.

The study of this paper corresponds to a large subcase of $\Gamma=0$ which goes through the case $\tilde \Gamma =0$ under our key Assumption \ref{ass-ps-tran}.  Indeed, Assumption \ref{ass-ps-tran} states that, near resonances, the interaction coefficients $\Pi_j(\xi+p k)B\big(\Pi(p \b),\Pi_{j'} (\xi)\big)$ cannot be controlled by the resonant phase $|\l_j(\xi+p k)-\l_{j'}(\xi)-p\o|$, but rather are controlled by some fraction power of the resonant phase $|\l_j(\xi+p k)-\l_{j'}(\xi)-p\o|^\a, 0<\a\leq 1$. Even the scaling of this paper is different from that in \cite{em4}, the idea introduced in this paper may be well employed.

\section{Semiclassical Fourier multipliers}\label{sec:semi-Fourier}

In this section, we introduce the basic concepts about semiclassical Fourier multipliers, in particular the commutator estimates between a semiclassical Fourier multiplier and a scalar function multiplier. This will be needed throughout the paper. In the sequel of this paper, the function $g$, or $g_p$ in the next sections, is also considered as the operator which consists in the multiplication by this function.

We say a smooth scalar, vector or matrix valued function  $\sigma(\xi)$ to be a classical symbol of order $m$  provided
$$
|\d_\xi^\a \sigma(\xi)|\leq C_\a \lxi^{m-\a},\quad \lxi:= \left(1+|\xi|^2\right)^{\frac{1}{2}},\quad \mbox{for any $\a\in \N^d$}.
$$
We use $S^m$ to denote the set of all classical symbols of order $m$. The classical Fourier multiplier associated with a symbol $\sigma(\xi)$ is denoted by $\sigma(D_x)$, and is defined as
\be\label{def-Fourier1}
\sigma( D_x) u:=\mathfrak{F}^{-1}[\sigma(\xi)\hat u (\xi)]=\mathfrak{F}^{-1}[\sigma]*u,
\ee
where $\hat u(\xi)£º=\mathfrak{F}[u](\xi)$ is the Fourier transform of $u$ and $\mathfrak{F}^{-1}$ denotes the inverse of Fourier transform.

The semiclassical Fourier multiplier associated with a symbol $\sigma(\xi)$ is denoted by $\sigma(\e D_x)$, and is defined as
\be\label{def-Fourier2}
\sigma(\e D_x) u:=\mathfrak{F}^{-1}[\sigma(\e\xi)\hat u (\xi)]=\mathfrak{F}^{-1}[\sigma(\e\cdot)]*u=\e^{-d}\mathfrak{F}^{-1}[\sigma]\left(\frac{\cdot}{\e}\right)*u.
\ee

The definitions in \eqref{def-Fourier1} and \eqref{def-Fourier2} can be generated to less regular symbols $\s$ as long as the definitions make sense.

 We now give two properties that we will use in this paper for classical and semiclassical Fourier multipliers. The first one is rather direct:
\begin{lem}\label{lem-bound} Let $\sigma\in L^\infty$, then for any $s\in \R$ and $\e>0$:
\be\label{bound-fourier1}
\|\sigma(D_x) u\|_{H^s} \leq  \|\sigma(\cdot)\|_{L^\infty} \| u\|_{H^{s}}, \quad \|\sigma(\e D_x) u\|_{H^s} \leq  \|\sigma(\cdot)\|_{L^\infty} \| u\|_{H^{s}}.\nn
\ee
\end{lem}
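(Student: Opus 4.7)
The plan is to reduce both inequalities to a pointwise bound on the symbol via the Fourier-side characterization of $H^s$. Recall that for $u\in H^s(\R^d)$, by Plancherel's theorem,
$$
\|u\|_{H^s}^2=\int_{\R^d}\langle \xi\rangle^{2s}|\hat u(\xi)|^2\,d\xi,
$$
and that the definitions \eqref{def-Fourier1} and \eqref{def-Fourier2} make $\sigma(D_x)u$ and $\sigma(\e D_x)u$ simply the inverse Fourier transforms of $\sigma(\xi)\hat u(\xi)$ and $\sigma(\e\xi)\hat u(\xi)$ respectively.

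For the first bound, I would write
$$
\|\sigma(D_x)u\|_{H^s}^2=\int_{\R^d}\langle \xi\rangle^{2s}|\sigma(\xi)|^2|\hat u(\xi)|^2\,d\xi\leq \|\sigma\|_{L^\infty}^2\int_{\R^d}\langle \xi\rangle^{2s}|\hat u(\xi)|^2\,d\xi=\|\sigma\|_{L^\infty}^2\|u\|_{H^s}^2,
$$
where the middle inequality uses only that $|\sigma(\xi)|\leq\|\sigma\|_{L^\infty}$ pointwise. Taking square roots yields the claim. The second bound follows by the exact same calculation with $\sigma(\xi)$ replaced by $\sigma(\e\xi)$; crucially, for any fixed $\e>0$ the dilated symbol satisfies the identical pointwise estimate $|\sigma(\e\xi)|\leq \|\sigma\|_{L^\infty}$, with no dependence on $\e$. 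This is why the two inequalities have the same constant.

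There is essentially no obstacle here: the statement is a direct consequence of Plancherel and the fact that the $L^\infty$ norm of a symbol is invariant under the rescaling $\xi\mapsto \e\xi$. The vector/matrix-valued case is handled componentwise (or, alternatively, by interpreting $|\sigma(\xi)|$ as an operator norm on $\C^N$, which is still dominated by $\|\sigma\|_{L^\infty}$). No regularity of $\sigma$ beyond boundedness is required, which is precisely what allows the estimate to be stated under the mere hypothesis $\sigma\in L^\infty$.
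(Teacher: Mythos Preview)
Your proof is correct and is exactly the direct Plancherel argument the paper has in mind; indeed, the paper does not spell out a proof at all, merely remarking that the statement is ``rather direct.'' There is nothing to add.
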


\smallskip

The second one is about the commutator estimates.
\begin{lem}\label{lem-comm} Let $\sigma\in C^1$ such that $\|\nabla_\xi \sigma\|_{L^\infty}<\infty $ and  $g(x)\in H^{d/2+1+\eta_0}$  a scalar function for some $\eta_0>0$. Then there holds for any $s\geq 0$:
\be\label{bound-fourier2}
\|[\sigma(\e D_x),g(x)] u \|_{H^s} \leq \e\, C_{\eta_0}\, 2^s \|\nabla_\xi\sigma\|_{L^\infty} \left(\| g\|_{H^{\frac{d}{2}+1+\eta_0}} \|u\|_{H^{s}}+\|g\|_{H^{s+1}} \|u\|_{H^{\frac{d}{2}+\eta_0}}\right).\nn
\ee
\end{lem}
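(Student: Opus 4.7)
The plan is to work entirely on the Fourier side and extract the factor $\e$ from the symbol difference. First, using the convolution-product rule for the Fourier transform, I would write
\[
\mathfrak{F}\bigl([\sigma(\e D_x),\, g]\, u\bigr)(\xi) \;=\; \int_{\R^d} \hat g(\xi-\eta)\,\bigl[\sigma(\e\xi) - \sigma(\e\eta)\bigr]\, \hat u(\eta)\, d\eta,
\]
and then apply the mean value theorem to $\sigma$ to bound $|\sigma(\e\xi) - \sigma(\e\eta)| \leq \e\,|\xi-\eta|\,\|\nabla_\xi \sigma\|_{L^\infty}$. The resulting pointwise majorant of the Fourier transform of the commutator is a convolution whose kernel is $|\xi-\eta||\hat g(\xi-\eta)||\hat u(\eta)|$, already carrying the desired factor $\e$.

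Next, to produce the $\langle\xi\rangle^s$ weight needed for the $H^s$ norm, I would apply the elementary inequality $\langle\xi\rangle^s \leq 2^s(\langle\xi-\eta\rangle^s + \langle\eta\rangle^s)$, valid for $s \geq 0$, which immediately splits the bound into two convolution terms: in the first, the weight $\langle\xi-\eta\rangle^s$ is attached to $\hat g$; in the second, $\langle\eta\rangle^s$ is attached to $\hat u$. Each piece is then estimated by Young's convolution inequality $\|f * h\|_{L^2} \leq \min\bigl(\|f\|_{L^1}\|h\|_{L^2},\, \|f\|_{L^2}\|h\|_{L^1}\bigr)$, with the placement of $L^1$ versus $L^2$ tailored to the target. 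For the first piece I would put $\langle\cdot\rangle^s|\cdot|\,|\hat g|$ in $L^2$, giving $\|g\|_{H^{s+1}}$, and $|\hat u|$ in $L^1$; for the second piece I would swap the roles, putting $|\cdot|\,|\hat g|$ in $L^1$ and $\langle\cdot\rangle^s|\hat u|$ in $L^2$, which yields $\|u\|_{H^s}$ directly.

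The remaining $L^1$ norms of $|\hat u|$ and $|\cdot|\,|\hat g|$ are controlled via Cauchy--Schwarz against the weight $\langle\cdot\rangle^{d/2+\eta_0}$: one obtains $\|\hat u\|_{L^1} \leq C_{\eta_0}\|u\|_{H^{d/2+\eta_0}}$ and $\bigl\| \,|\cdot|\,|\hat g|\,\bigr\|_{L^1} \leq C_{\eta_0}\|g\|_{H^{d/2+1+\eta_0}}$, the key ingredient being that $\langle\cdot\rangle^{-(d/2+\eta_0)} \in L^2(\R^d)$ precisely when $\eta_0 > 0$, which is exactly the hypothesis of the lemma. Summing the two resulting bounds yields the claimed inequality, with the factor $2^s\,\e\,\|\nabla_\xi\sigma\|_{L^\infty}$ emerging naturally from the computation.

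I do not anticipate a substantive obstacle here. The only mild subtlety is the asymmetric placement of $L^1$ versus $L^2$ in Young's inequality for each of the two pieces: making the opposite choice in one of them would produce a valid but strictly weaker estimate involving $\|g\|_{H^{s+1+d/2+\eta_0}}$, so a little care is needed to match the target $\|g\|_{H^{d/2+1+\eta_0}}\|u\|_{H^s} + \|g\|_{H^{s+1}}\|u\|_{H^{d/2+\eta_0}}$ precisely.
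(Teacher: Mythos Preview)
Your proposal is correct and follows essentially the same route as the paper's proof: compute the commutator on the Fourier side, extract the factor $\e\,|\xi-\eta|\,\|\nabla_\xi\sigma\|_{L^\infty}$ via the mean value theorem, distribute the weight $\langle\xi\rangle^s$, and conclude by Young's inequality together with Cauchy--Schwarz to convert the $L^1$ norms of $|\hat u|$ and $|\cdot|\,|\hat g|$ into Sobolev norms. The only cosmetic difference is that the paper distributes $\langle\xi\rangle^s$ by splitting the domain of integration into $\{|\eta|>|\xi|/2\}$ and $\{|\eta|\le |\xi|/2\}$, whereas you use the algebraic inequality $\langle\xi\rangle^s\le 2^s(\langle\xi-\eta\rangle^s+\langle\eta\rangle^s)$ directly; both lead to the identical convolution bound and the same constant $2^s$.
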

The point of Lemma \ref{lem-comm} is that the commutator of a semiclassical Fourier multiplier and a regular scalar function is  of order $\e$.

\begin{proof}[Proof of Lemma \ref{lem-comm}] Let
$$I(\xi):=\mathfrak{F}\big[[\sigma(\e D_x),g(x)] u\big ](\xi).$$
Then
$$
\|[\sigma(\e D_x),g(x)] u \|_{H^s}=\|\lxi^s I(\xi) \|_{L^2}.
$$
By the definition of semiclassical Fourier multiplier, we have
\ba\label{compute-commutator1}
&I(\xi)=\mathfrak{F}[\s(\e D_x)(gu)]-\mathfrak{F}[g\s(\e D_x)(u)]=\s(\e \xi)\mathfrak{F}[(gu)]-\mathfrak{F}[g\s(\e D_x)(u)]\\
&\qquad =\s(\e \xi) (\hat g*\hat u)(\xi)-\big(\hat g * (\sigma(\e \cdot) \hat u\big)(\xi)\\
&\qquad=\s (\e \xi)\int_{\R^d} \hat g(\eta) \hat u(\xi-\eta)d\eta-\int_{\R^d} \hat g(\eta) \s (\e \xi-\e\eta)\hat u(\xi-\eta)d\eta\\
&\qquad=\int_{\R^d} \hat g(\eta)\left(\s (\e \xi)-\s (\e \xi-\e\eta)\right)  \hat u(\xi-\eta)d\eta\\
&\qquad=\int_{\R^d} \hat g(\eta) \int_0^1 \e\eta\cdot (\nabla_\xi\s) (\e \xi-\e(1- t) \eta)dt \, \hat u(\xi-\eta) \,d\eta.\nn
\ea
Then
\ba\label{compute-commutator2}
\left|\lxi^sI(\xi)\right| &\leq \e \|\nabla_\xi \s\|_{L^\infty} \int_{\R^d} \lxi^s |\eta||\hat g(\eta)|\,| \hat u(\xi-\eta)|\,d\eta\\
&\leq \e \|\nabla_\xi \s\|_{L^\infty} \Big(\int_{|\eta|>\frac{|\xi|}{2}} \lxi^s |\eta||\hat g(\eta)|\,| \hat u(\xi-\eta)|\,d\eta\\
&\qquad \qquad \qquad+\int_{|\eta|\leq \frac{|\xi|}{2}} \lxi^s |\eta||\hat g(\eta)|\,| \hat u(\xi-\eta)|\,d\eta \Big)\\
&\leq \e 2^s  \|\nabla_\xi \s\|_{L^\infty} \Big(\int_{|\eta|>\frac{|\xi|}{2}} \langle \eta\rangle^{s} |\eta||\hat g(\eta)|\,| \hat u(\xi-\eta)|\,d\eta\\
&\qquad \qquad \qquad+\int_{|\eta|\leq \frac{|\xi|}{2}} \langle\xi- \eta\rangle ^s |\eta||\hat g(\eta)|\,| \hat u(\xi-\eta)|\,d\eta \Big)\\
&\leq \e 2^s  \|\nabla_\xi \s\|_{L^\infty} \left(|\lxi^{s+1}\hat g(\xi)|*|\hat u(\xi)|+|\xi\hat g(\xi)|*|\lxi^{s}\hat u(\xi)|\right).
\nn\ea
Young's inequality yields
\ba\label{compute-commutator3}
\left|\lxi^sI(\xi)\right|_{L^2} \leq \e 2^s\|\nabla_\xi \s\|_{L^\infty}\left(\|\lxi^{s+1}\hat g(\xi)\|_{L^2}\|\hat u(\xi)\|_{L^1}+\|\xi\hat g(\xi)\|_{L^1}\|\lxi^{s}\hat u(\xi)\|_{L^2}\right).\nn
\ea
H\"older's inequality implies
$$
\|\xi\hat g(\xi)\|_{L^1}\leq C_{\eta_0} \|\lxi^{d/2+1+\eta_0} \hat g(\xi)\|_{L^2},\quad \|\hat u(\xi)\|_{L^1}\leq C_{\eta_0} \|\lxi^{d/2+\eta_0}\hat u(\xi)\|_{L^2}.
$$
Finally, we obtain
\ba\label{compute-commutator4}
\left|\lxi^sI(\xi)\right|_{L^2} \leq  \e C_{\eta_0} 2^s \|\nabla_\xi \s\|_{L^\infty}\left(\| g\|_{H^{\frac{d}{2}+1+\eta_0}} \|u\|_{H^{s}}+\|g\|_{H^{s+1}} \|u\|_{H^{\frac{d}{2}+\eta_0}}\right).\nn
\ea
This completes the proof of Lemma \ref{lem-comm}.
\end{proof}

\section{Perturbed system and diagonalization}\label{sec:stability}

Now we start proving Theorem \ref{thm:general}. From now on, we suppose Assumption \ref{ass-spec}, Assumption \ref{ass-app} and Assumption \ref{ass-ps-tran} are satisfied.

\subsection{Perturbed system near approximate solution}
Associated with the approximate solution $U_a$ given in Assumption \ref{ass-app}, we define the perturbation
\be\label{def-dot-U}
\dot U:=\frac{U-U_a}{\e},
\ee
where $U \in C\left([0,T^*_\e);H^s\right)$ is the local-in-time solution to original Cauchy problem \eqref{00}.
Then at least over time interval $[0,T^*_\e)$, the perturbation $\dot U$ solves
 \be \label{eq-dot-U} \left\{ \begin{aligned} &\d_t \dot U  + \frac{1}{\e} A(\d_x) \dot  U + \frac{1}{\e^2} A_0 \dot U = 2B(U_a)\dot U +\e B(\dot U,\dot U)+ \e R^\e,\\
 &\dot U(0)= \psi^\e,\end{aligned}\right.
 \ee
where the linear operator $B(U_a)$ is defined as
\be\label{def-B-W}
B(U_a) W:=B(U_a, W),\quad \mbox{for any $W\in \C^{N}$}.\nn
\ee
The remainder $(R^\e,\psi^\e)$ satisfies the uniform estimate given in \eqref{bound-R-Psi}.

\smallskip

To prove Theorem \ref{thm:general}, it is sufficient to show the existence and uniform estimates for the solution of \eqref{eq-dot-U} over corresponding long time intervals.


The perturbed system \eqref{eq-dot-U} has small nonlinearity of order $O(\e)$. By careful, rather classical  analysis ($L^2$ estimate and Grownwall's inequality), it can be shown that the maximal existence time, denoted by $\dot T^*_\e$,  to Cauchy problem \eqref{eq-dot-U} satisfies
$$
\lim_{\e \to 0} \dot T_\e^*=\infty.
$$
By employing the argument in \cite{CD_DS}, one can even show the existence up to time of the logarithmic order:
$$
\dot T_\e^* \geq T_0 |\ln \e|,\quad \mbox{ for some $T_0>0$ independent of $\e$}.
$$

To show the existence up to even longer time of order $O(1/\e^\g)$,  we need to discover more structure of the system  \eqref{eq-dot-U}. To this end, we will diagonalize the differential operator on the left-hand side of \eqref{eq-dot-U} by diagonalizing  the corresponding symbol, then consider the system mode by mode.

\subsection{Diagonalization}\label{sec:diag}

 According to the smooth spectral decomposition assumed in Assumption \ref{ass-spec}, we can write
\be\label{sp-A-op}
A(\e D_x)+A_0/i=\sum_{j=1}^J \l_j(\e D_x) \Pi_j(\e D_x),\quad D_x:=\d_x/i.\nn
\ee

We want to go deep to the structure of the system in \eqref{eq-dot-U}. Hence, we consider the system mode by mode, through the following change of unknown:
\be\label{def-dot-U1}
\dot U_1=\bp\dot U_1^1\\ \vdots \\\dot U_1^J\ep:=\bp \Pi_1 (\e D_x)\dot U\\ \vdots \\ \Pi_J(\e D_x) \dot U\ep \in \R^{JN}.
\ee
We remark that, by Lemma \ref{lem-bound}, $\Pi_j(\e D_x),~1\leq j\leq J$ are linear operators bounded from $H^s$ to $H^s$ for any $s\in \R$. Hence
$$
\|\dot U_1(t,\cdot)\|_{H^s}\leq C \|\dot U(t,\cdot)\|_{H^s},\quad \mbox{for any $s\in \R$ and any $t\geq 0$}.
$$
Conversely,  we can reconstruct $\dot U$ via $\dot U_1$:
$$
\dot U:=\sum_{j=1}^J U_1^j
$$
 due to the fact
$$
\sum_{j=1}^J\Pi_j = \Id.
$$
We observe that
\be\label{Ua-U0-Ur}
B(U_a)=B(U_0)+\e B(U_r),\quad U_r:=U_1+\e U_2+\e^2 U_3.
\ee
Then by \eqref{eq-dot-U}, the equation in $\dot U_1$ is of the form
\be\label{eq-dot-U1}
\d_t \dot U_1  + \frac{i}{\e^2} A_1(\e D_x) \dot  U_1 =B_1 \dot U_1 +\e B_r \dot U_1 +\e F_1(\dot U_1,\dot U_1)+ \e R_1.
\ee

\smallskip

The propagator $A_1$ on the left-hand side is a diagonal matrix valued semiclassical Fourier multiplier
\be\label{def-A1}
A_1(\e D_x):={\rm diag}\,\{\l_1(\e D_x), \cdots, \l_J(\e D_x)\}.\nn
\ee

\smallskip

The leading linear operator $B_1$ on the right-hand side is
\be\label{def-B1}
B_1:=2\bp \Pi_j(\e D_x ) B(U_0) \Pi_{j'}(\e D_x)\ep_{1\leq j,j'\leq J},
\ee
which is of matrix form and is associated with the leading term $U_0$. By the form of $U_0$ in Assumption \ref{ass-app}, we have
\be\label{def-int-coe}
\Pi_j(\e D_x ) B(U_0) \Pi_{j'}(\e D_x):=\sum_{p=\pm1} e^{-ip\o t/\e^2} \Pi_j(\e D_x ) B(U_{0,p}) \Pi_{j'}(\e D_x).
\ee
The terms $\Pi_j(\e D_x ) B(U_{0,p}) \Pi_{j'}(\e D_x)$ are also named {\em interaction coefficients}. To specify, $\Pi_j(\e D_x ) B(U_{0,p}) \Pi_{j'}(\e D_x)$ is called {\em $(j,j',p)$ interaction coefficient}.

\smallskip

The remainder linear operator $B_r$ is
\be\label{def-P1}
B_r:=2\bp \Pi_j(\e D_x ) B(U_r) \Pi_{j'}(\e D_x)\ep_{1\leq j,j'\leq J},\nn
\ee
which is associated with the remainder term $U_r$ defined in  \eqref{Ua-U0-Ur}.

\smallskip

The nonlinear term $F_1$ is
\be\label{def-F1}
F_1(\dot U_1,\dot U_1):=\bp \Pi_1(\e D_x ) B(\dot U,\dot U)\\ \vdots \\\Pi_J(\e D_x ) B(\dot U,\dot U)\ep,\quad \dot U=\sum_{j=1}^J \dot U_1^j.\nn
\ee

\smallskip

Finally the remainder $R_1$ is

\be\label{def-R1}
R_1:=\bp \Pi_1(\e D_x ) R^\e\\ \vdots \\\Pi_J(\e D_x ) R^\e \ep.\nn
\ee

\medskip

To avoid notational complexity, we rewrite \eqref{eq-dot-U1} in the following more compact form
\be\label{eq-dot-U1-1}
\d_t \dot U_1  + \frac{i}{\e^2} A_1(\e D_x) \dot  U_1 =B_1 \dot U_1 +\e \mathcal{R}_1,
\ee
where $\mathcal{R}_1$ is the sum of all the $O(\e)$ terms.  By the uniform estimates for the approximate solution assumed in Assumption \ref{ass-app}, and by Lemma \ref{lem-bound} and Lemma \ref{lem-comm} about the actions of Fourier multipliers,  we have the estimate
\be\label{est-R1}
\|{\cal R}_{1}(t,\cdot)\|_{H^\mu} \leq C \left(1+\|\dot U (t,\cdot)\|_{L^\infty}\right)\|\dot U(t,\cdot)\|_{H^\mu},\quad \mbox{for all $0\leq \mu \leq s$}.\nn
\ee
The initial datum of $\dot U_1$ is
\be\label{ini-dotU1}
\dot U_1(0)=\bp \Pi_1 (\e D_x)\psi^\e\\ \vdots \\ \Pi_J(\e D_x) \psi^\e\ep,
\ee
which is uniformly bounded in $H^s$.

\section{Long time stability: Part I}\label{sec:pf-1st}

This section is devoted to proving the first part of Theorem \ref{thm:general}, that is the stability over time of order $O(1/\e^{\a})$ under Assumption \ref{ass-spec}, Assumption \ref{ass-app} and Assumption \ref{ass-ps-tran}.

\subsection{A singular localization}\label{sec:sin-loc}

To show long time of order $O(1/\e^\a)$ well-posedness for \eqref{eq-dot-U1-1} with $O(1)$ initial datum \eqref{ini-dotU1}, the idea here is to eliminate the $O(1)$ term $B_1$ on the right-hand side of \eqref{eq-dot-U1-1} up to a small remainder of order $O(\e^\a)$ in this section. Then we employ the classical theory to obtain the long time existence. However, the strong transparency condition is not satisfied in our setting, so we cannot simply use the normal form reduction method to achieve this. The main novelty of our study is to carry out a singular localization on the interaction coefficients; together with the normal form reduction, we show that we can eliminate the $O(1)$ interaction coefficients up to small remainders.

\medskip

We recall the definition of resonance sets for any $1\leq j\leq J, \ 1\leq j'\leq J,\ p\in \{-1,1\}$ in Assumption \ref{ass-ps-tran}:
\be\label{def-re-set}
R_{j,j',p}:=\{\xi\in\R^d\,:\,\l_j(\xi)-\l_{j'}(\xi)-p \o=0\}.\nn
\ee
Compared to the definition in \eqref{def-Rjjp}, we remark that here we have the zero spatial wave number: $k=0$.

If for some $(j',j,p)$ the corresponding resonance set $R_{j,j',p}$ is empty, by Assumption \ref{ass-ps-tran} and the smoothness and boundedness of $\Pi_j(\cdot)$, the following strong transparency condition is automatically  satisfied:
\be\label{no-reso2}
\big|\Pi_j(\xi)B(e_p)\Pi_{j'} (\xi) \big|\leq C |\l_j(\xi)-\l_{j'}(\xi)-p\o|.\nn
\ee
This indicates that, if  $(j,j',p) \in J_r$ defined as
\be\label{def:J0}
J_r:=\{(j,j',p)\,:\, R_{j,j',p}=\emptyset\},\nn
\ee
the exponent $\a_{j,j',p}$ in \eqref{sttrans-new} can be taken to be $1$. We thus introduce the index set
\be\label{def:J1}
J_1:=\{(j,j',p)\,:\, \mbox{\eqref{sttrans-new} holds for $\a_{j,j',p}=1$}\}\supset J_r.
\ee

Now we introduce smooth cut-off functions $\chi_{j,j',p}$ that are supported near resonance sets:
\ba\label{def-cut-sing}
 &\mbox{If $(j,j',p)\in J_1$}, \quad \chi_{j,j',p}\equiv 0.\\
 &\mbox{If $(j,j',p)\not\in J_1$}, \quad \chi_{j,j',p}\in C_c^\infty\left( R_{j,j',p}^{2h_\e}\right),\quad \chi_{j,j',p}\equiv 1 \ \mbox{on}\ R_{j,j',p}^{h_\e},\quad 0\leq \chi_{j,j',p}\leq 1,
\ea
where $0<h_\e<1$ is a small positive number depending on $\e$ and is to be determined later on, and
\be\label{def-Rh}
R_{j,j',p}^{h}:=\{\xi\in\R^d\,:\,|\l_j(\xi)-\l_{j'}(\xi)-p \o|< h \},\quad \mbox{for $h>0$ small}.
\ee
By the compactness assumption on resonance sets in Assumption \ref{ass-ps-tran}, for sufficient small $h$, the sets $R_{j,j',p}^{h}$ defined in \eqref{def-Rh} are uniformly bounded. The index $h_\e$ will be chosen relatively small such that $ R_{j,j',p}^{2h_\e}$ are all bounded.

We consider the decomposition of the leading linear operator $B_1$  defined in \eqref{def-B1} and \eqref{def-int-coe}:
\be\label{dec-B1-in-out}
B_1:=B_{in}+B_{out}
\ee
with
\ba\label{def-Bin-out}
& B_{in}:=2\sum_{p=\pm1} e^{-ip\o t/\e^2}\left( \chi_{j,j',p}(\e D_x) \Pi_j(\e D_x ) B(U_{0,p}) \Pi_{j'}(\e D_x)\right)_{1\leq j,j'\leq J},\\
& B_{out}:=2\sum_{p=\pm1} e^{-ip\o t/\e^2}\left( (1-\chi_{j,j',p})(\e D_x) \Pi_j(\e D_x ) B(U_{0,p}) \Pi_{j'}(\e D_x)\right)_{1\leq j,j'\leq J}.\\
\ea

The part $B_{in}$ is localized near the resonances while the other part $B_{out}$ is localized away from the resonances. However, this localization depends on $\chi_{j,j',p}$ which may be singular in $\e$ duce to the definition in \eqref{def-cut-sing}. Indeed, by \eqref{def-cut-sing}, the support of $\chi_{j,j',p}$ shrinks to the resonance set $R_{j,j',p}$ if $h_\e\to 0$ as $\e\to 0$. By our choice of $h_\e$ later on (see \eqref{choi-he1} and \eqref{choi-he20}), we do have $h_\e\to 0$ as $\e\to 0$. This causes the derivatives of $\chi_{j,j',p}$ could be unbounded as $\e \to 0$.  This is why we call this localization to be singular.

\medskip

First of all, we show that under Assumption \ref{ass-ps-tran}, the part $B_{in}$ near the resonance is small of order $O(h_\e^\a)$:

\begin{prop}\label{prop:Bin}
There exits $C>0$ such that for any  $d/2<\mu\leq s$ and any $V\in {H^\mu}$,  there holds
\be\label{Bin-small}
\|B_{in} V\|_{H^\mu}\leq C\,(h_\e^\a+\e) \, \|V\|_{H^\mu}.\nn
\ee
\end{prop}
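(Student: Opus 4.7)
The plan rests on the singular cut-off identity \eqref{def-cut-sing}: on the support of $\chi_{j,j',p}$, the interaction phase satisfies $|\l_j(\e\xi)-\l_{j'}(\e\xi)-p\o|\leq 2h_\e$, so Assumption \ref{ass-ps-tran} yields the pointwise symbol bound
\begin{equation*}
  \bigl|\chi_{j,j',p}(\e\xi)\,\Pi_j(\e\xi) B(e_p)\Pi_{j'}(\e\xi)\bigr|
  \leq C(2h_\e)^{\a_{j,j',p}}\leq C\,h_\e^\a,\qquad \xi\in\R^d,
\end{equation*}
where the last inequality uses $h_\e<1$ and $\a_{j,j',p}\geq\a$. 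For indices $(j,j',p)\in J_1$ the cut-off vanishes, so only indices outside $J_1$ (in particular with $\a_{j,j',p}<1$) need to be analyzed.

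Writing $U_{0,p}=g_p e_p$ from Assumption \ref{ass-app} and using that $g_p$ is scalar, a generic nonzero entry of $B_{in}$ acts as
\begin{equation*}
T_{j,j',p} V = e^{-ip\o t/\e^2}\,\chi_{j,j',p}(\e D_x)\,\Pi_j(\e D_x)\,B(e_p)\,g_p\,\Pi_{j'}(\e D_x)\,V.
\end{equation*}
I would split $g_p\,\Pi_{j'}(\e D_x) V = \Pi_{j'}(\e D_x)(g_p V) + [g_p,\Pi_{j'}(\e D_x)] V$ and bound the two resulting pieces separately. The first piece becomes one single semiclassical Fourier multiplier, with matrix symbol $\chi_{j,j',p}(\e\xi)\Pi_j(\e\xi) B(e_p)\Pi_{j'}(\e\xi)$, applied to $g_p V$; Lemma \ref{lem-bound} together with the symbol bound above and the algebra property of $H^\mu$ (since $\mu>d/2$) yields a contribution $\leq C h_\e^\a\,\|g_p\|_{H^\mu}\|V\|_{H^\mu}\leq C h_\e^\a\|V\|_{H^\mu}$. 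The second piece is handled by Lemma \ref{lem-comm}: since $\Pi_{j'}\in S^0$ has $\|\nabla_\xi\Pi_{j'}\|_{L^\infty}<\infty$ and $g_p$ is uniformly bounded in $H^{s+1}$, the commutator is $O(\e)\|V\|_{H^\mu}$, while the prefactor $\chi_{j,j',p}(\e D_x)\Pi_j(\e D_x) B(e_p)$ is bounded on $H^\mu$ by Lemma \ref{lem-bound}. Summing over the finitely many $(j,j',p)$ entries delivers the stated estimate.

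The only real trap is the choice of which commutator to take. The natural-looking move of commuting $\chi_{j,j',p}(\e D_x)$ past $g_p$ must be resisted: because $\chi_{j,j',p}$ varies on the scale $h_\e$, one has $\|\nabla_\xi\chi_{j,j',p}\|_{L^\infty}\sim 1/h_\e$, and Lemma \ref{lem-comm} would then return $O(\e/h_\e)$ instead of $O(\e)$, forcing $h_\e\gg\e$ and producing a rate strictly inferior to $h_\e^\a+\e$. Keeping the singular cut-off glued to the interaction symbol as a single Fourier multiplier, and commuting only the smooth $\Pi_{j'}$ past $g_p$, is the mechanism that circumvents this loss.
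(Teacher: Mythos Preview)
Your proposal is correct and follows essentially the same route as the paper: the same commutator split $g_p\Pi_{j'}(\e D_x)=\Pi_{j'}(\e D_x)g_p+[g_p,\Pi_{j'}(\e D_x)]$, the same use of Lemma~\ref{lem-bound} on the combined symbol $\chi_{j,j',p}\Pi_j B(e_p)\Pi_{j'}$ to extract $h_\e^\a$, and the same appeal to Lemma~\ref{lem-comm} for the $O(\e)$ commutator term. Your closing remark about why one must not commute the singular cut-off $\chi_{j,j',p}$ past $g_p$ is a useful observation that the paper leaves implicit.
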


\begin{proof}[Proof of Proposition \ref{prop:Bin}] By the definition of $\chi_{j,j',p}$  in \eqref{def-cut-sing}, it is sufficient to prove for any $(j,j',p)\not\in J_1$ and any $u\in H^{\mu}$, there holds
\be\label{Bin-small-1}
\| \chi_{j,j',p}(\e D_x) \Pi_j(\e D_x ) B(U_{0,p}) \Pi_{j'}(\e D_x) u\|_{H^\mu}\leq C\,(h_\e^\a+\e) \, \|u\|_{H^\mu}.
\ee

By using \eqref{ass-u0} and \eqref{ass-U01} in Assumption \ref{ass-app} and the actions of semiclassical Fourier multiplier in Lemma \ref{lem-bound},   we compute
\ba\label{Bin-small-2}
&\| \chi_{j,j',p}(\e D_x) \Pi_j(\e D_x ) B(U_{0,p}) \Pi_{j'}(\e D_x) u\|_{H^\mu}\\
&\quad = \| \chi_{j,j',p}(\e D_x) \Pi_j(\e D_x ) B(g_p e_p) \Pi_{j'}(\e D_x) u\|_{H^\mu}\\
&\quad \leq \| \chi_{j,j',p}(\e D_x) \Pi_j(\e D_x ) B(e_p) \Pi_{j'}(\e D_x) (g_p u)\|_{H^\mu}\\
&\qquad +\| \chi_{j,j',p}(\e D_x) \Pi_j(\e D_x ) B(e_p) [g_p, \Pi_{j'}(\e D_x)] u\|_{H^\mu}\\
&\quad \leq  \|\chi_{j,j',p}(\xi)\Pi_j(\xi) B(e_p) \Pi_{j'}(\xi))\|_{L^\infty_\xi} \|g_pu\|_{H^\mu}\\
&\qquad +\|\chi_{j,j',p}(\xi) \Pi_j(\xi ) B(e_p)\|_{L^\infty_\xi} \|[g_p, \Pi_{j'}(\e D_x)] u\|_{H^\mu}.\\
\ea

By the definition of $\chi_{j,j',p}$ in \eqref{def-cut-sing} and the condition \eqref{sttrans-new} in Assumption \ref{ass-app}, we have
\be\label{Bin-small-3}
\|\chi_{j,j',p}(\xi)\Pi_j(\xi) B(e_p) \Pi_{j'}(\xi))\|_{L^\infty_\xi}\leq C\,h_\e^{\a},\quad \|\chi_{j,j',p}(\xi) \Pi_j(\xi ) B(e_p)\|_{L^\infty_\xi} \leq C.\nn
\ee

By the regularity assumption $g_p\in L^\infty (0,\infty;H^{s+1})$ in Assumption \ref{ass-app}, there holds
\be\label{Bin-small-4}
\|g_pu\|_{H^\mu}\leq \| g_p\|_{H^\mu}\|u\|_{H^\mu} \leq C\,\|u\|_{H^\mu}.\nn
\ee

By Lemma \ref{lem-comm} concerning the commutator estimate, we have
\be\label{Bin-small-5}
\|[g_p, \Pi_{j'}(\e D_x)] u\|_{H^\mu}\leq C\,\e\,\|g_p\|_{H^{\mu+1}}\| u\|_{H^\mu}\leq C\,\e\,\|u\|_{H^\mu}.
\ee

Combining the estimates in \eqref{Bin-small-2}-\eqref{Bin-small-5} implies \eqref{Bin-small-1}. The proof is completed.

\end{proof}

Now it is left to deal with the part localized away from resonance sets.

\subsection{Normal form reduction}

Since $B_{out}$ is localized away from resonance, we can employ the normal reduction method to eliminate it up to some remainder. The issue is that due the singularity of the localization functions $\chi_{j,j',p}$ in $\e$, the remainder may not be small.

We will see later on, we can choose $h_\e$ properly to achieve a small remainder. We need to also choose $h_\e$  such that the remainder $B_{out}$ after the normal form reduction is of the same order as $B_{in}$ obtained in Proposition \ref{prop:Bin} in order to obtain the minimum remainder.

\medskip

We introduce the following formal change of unknown
\be\label{change1}
\dot U_2=\left(\Id +\e^2 M\right)^{-1} \dot U_1,
\ee
for some operator $M$ of the form
\be\label{M-form}
M=\sum_{p=\pm1}e^{-ip\o t/\e^2}\left( M_{jj'}^{(p)}\right)_{1\leq j,j'\leq J}
\ee
with the operator elements $M_{jj'}^{(p)}$ to be determined.

Then, by \eqref{eq-dot-U1-1}, the system in $\dot U_2$ has the form
\ba\label{eq-dot-U2}
&\d_t \dot U_2  + \frac{i}{\e^2} A_1(\e D_x) \dot  U_2 =\left(\Id +\e^2 M\right)^{-1}\left(B_{out}-i[A_1(\e D_x),M]-\e^2 \d_t M\right) \dot U_2 \\
&\qquad+\left(\Id +\e^2 M\right)^{-1}\left(\e^2 B_{out} M \dot U_2+ B_{in}\left(\Id +\e^2 M\right) \dot U_2 +\e \mathcal{R}_1\right).
\ea
The idea is to find some operator $M$ properly such that the $O(1)$ term on the right-hand side of \eqref{eq-dot-U2} is eliminated with a small remainder. This is done in the following proposition.
\begin{prop}\label{prop:M}
There exist symbols $\widetilde M_{jj'}^{(p)}(\xi) \in S^0$, $1\leq j,j'\leq J,~p\in \{-1,1\}$, such that $M$ defined in \eqref{M-form} with $M_{jj'}^{(p)}:=\widetilde M_{jj'}^{(p)}(\e D_x)\circ g_{p}$, which denotes the composition of Fourier multiplier $\widetilde M_{jj'}^{(p)}(\e D_x)$ and function multiplier $g_p$, satisfies
\be\label{prop-eq-M}
B_{out}-i[A_1(\e D_x),M]-\e^2 \d_t M=(\e \,h_\e^{\a-1}+\e^2 \,h_\e^{\a-1}+\e) M_r,\nn
\ee
where $M_r$ is a linear operator satisfying the estimate:
\be\label{est-Mr0}
\|M_r V\|_{H^\mu}\leq C \,\|V\|_{H^\mu},\quad \mbox{for any  $d/2<\mu\leq s$ and any $V\in {H^\mu}$}.
\ee
\end{prop}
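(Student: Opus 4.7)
The plan is to use the natural divisor ansatz
\[
\widetilde M_{jj'}^{(p)}(\xi):=-i\,\frac{2\,(1-\chi_{j,j',p}(\xi))\,\Pi_j(\xi)\,B(e_p)\,\Pi_{j'}(\xi)}{\l_j(\xi)-\l_{j'}(\xi)-p\o},
\]
extended by zero where $\chi_{j,j',p}\equiv 1$, and set $M_{jj'}^{(p)}:=\widetilde M_{jj'}^{(p)}(\e D_x)\circ g_p$. The first task is to verify that $\widetilde M_{jj'}^{(p)}$ is a well-defined order-zero symbol obeying
\[
\|\widetilde M_{jj'}^{(p)}\|_{L^\infty}\leq C\,h_\e^{\a-1}.
\]
This is the only place where Assumption \ref{ass-ps-tran} is actually invoked: on $\mathrm{supp}(1-\chi_{j,j',p})$ the denominator is $\geq h_\e$, while the numerator is bounded by $C|\l_j-\l_{j'}-p\o|^{\a_{j,j',p}}$, so the quotient is $\leq C\,|\l_j-\l_{j'}-p\o|^{\a_{j,j',p}-1}\leq C\,h_\e^{\a-1}$ (using $\a_{j,j',p}-1\leq 0$); for $(j,j',p)\in J_1$ the cutoff is absent and the same bound even holds uniformly in $\e$.

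Next I would compute $B_{out}-i[A_1(\e D_x),M]-\e^2\d_t M$ block-by-block and mode-by-mode. Two short algebraic computations drive the cancellation. Since $A_1(\e D_x)$ is diagonal,
\[
-i[A_1(\e D_x),M_{jj'}^{(p)}]=-i\big((\l_j-\l_{j'})\widetilde M_{jj'}^{(p)}\big)(\e D_x)\,g_p+i\,\widetilde M_{jj'}^{(p)}(\e D_x)\,[g_p,\l_{j'}(\e D_x)];
\]
differentiating the oscillating prefactor yields
\[
-\e^2\d_t\big(e^{-ip\o t/\e^2}\widetilde M_{jj'}^{(p)}(\e D_x)g_p\big)=ip\o\,e^{-ip\o t/\e^2}\,M_{jj'}^{(p)}-\e^2\,e^{-ip\o t/\e^2}\widetilde M_{jj'}^{(p)}(\e D_x)\d_t g_p;
\]
and pushing $g_p$ to the right inside $B_{out}^{(jj'p)}$ rewrites it as $2(1-\chi_{j,j',p})(\e D_x)\Pi_j(\e D_x) B(e_p)\Pi_{j'}(\e D_x)\,g_p$ modulo a commutator. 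After stripping the common $e^{-ip\o t/\e^2}$, the principal part reduces to the symbol identity
\[
2(1-\chi_{j,j',p})\Pi_j B(e_p)\Pi_{j'}-i(\l_j-\l_{j'}-p\o)\,\widetilde M_{jj'}^{(p)}\equiv 0,
\]
which is precisely the definition of $\widetilde M_{jj'}^{(p)}$.

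It remains to bundle the three leftover pieces into $(\e h_\e^{\a-1}+\e^2 h_\e^{\a-1}+\e)M_r$. Piece (i) is the commutator left over from rewriting $B_{out}^{(jj'p)}$, of the form $(1-\chi_{j,j',p})(\e D_x)\Pi_j(\e D_x)B(e_p)\,[g_p,\Pi_{j'}(\e D_x)]$: Lemma \ref{lem-comm} applied to $\Pi_{j'}\in S^0$ (whose gradient is bounded) gives $O(\e)$, and Lemma \ref{lem-bound} applied to the bounded outer symbol gives the $\e$ scaling. Piece (ii) is $i\widetilde M_{jj'}^{(p)}(\e D_x)[g_p,\l_{j'}(\e D_x)]$: Lemma \ref{lem-comm} applied to $\l_{j'}\in S^1$ (whose gradient is bounded) gives $O(\e)$ for the commutator, and the $L^\infty$ bound $\|\widetilde M_{jj'}^{(p)}\|_{L^\infty}\leq Ch_\e^{\a-1}$ fed into Lemma \ref{lem-bound} gives the $\e h_\e^{\a-1}$ scaling. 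Piece (iii) is $\e^2\widetilde M_{jj'}^{(p)}(\e D_x)\d_t g_p$, bounded directly by Lemma \ref{lem-bound} combined with $\|\d_t g_p\|_{H^{\mu+1}}\leq C$ from Assumption \ref{ass-app}, producing $\e^2 h_\e^{\a-1}$. Summing over $j,j',p$ yields a bounded $M_r$ with the stated uniform estimate \eqref{est-Mr0}.

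The main obstacle, and the reason for the ``singular localization'' terminology, is that the cutoffs $\chi_{j,j',p}$ have derivatives of size $\sim 1/h_\e$, which would be fatal if the argument needed to control anything beyond the $L^\infty$ norm of the symbol $\widetilde M_{jj'}^{(p)}$. The design principle of the proof is therefore to invoke Lemma \ref{lem-bound} for the bad symbol $\widetilde M_{jj'}^{(p)}$ (which uses only $L^\infty$) and to restrict all commutator estimates (Lemma \ref{lem-comm}) to the smooth, $\e$-independent symbols $\Pi_{j'}\in S^0$ and $\l_{j'}\in S^1$; the singularity of $\chi_{j,j',p}$ is then confined to the scalar prefactor $h_\e^{\a-1}$ and never escapes into an uncontrolled gradient.
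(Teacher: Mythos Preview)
Your proposal is correct and follows essentially the same route as the paper: the same divisor ansatz for $\widetilde M_{jj'}^{(p)}$, the same three remainder pieces (the $[g_p,\Pi_{j'}]$ commutator from rewriting $B_{out}$, the $\widetilde M_{jj'}^{(p)}(\e D_x)[\l_{j'},g_p]$ commutator from $[A_1,M]$, and the $\e^2\widetilde M_{jj'}^{(p)}(\e D_x)\d_t g_p$ term), and the same scalings $\e$, $\e h_\e^{\a-1}$, $\e^2 h_\e^{\a-1}$ via Lemmas~\ref{lem-bound} and~\ref{lem-comm}. One harmless slip: for piece~(iii) you invoke $\|\d_t g_p\|_{H^{\mu+1}}$, but Assumption~\ref{ass-app} only gives $\d_t g_p\in H^s$; since the product estimate in $H^\mu$ for $\mu>d/2$ only needs $\d_t g_p\in H^\mu\subset H^s$, the argument goes through with $H^\mu$ in place of $H^{\mu+1}$.
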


\begin{proof} [Proof of Proposition \ref{prop:M}]
Given $M$ of the form \eqref{M-form}, we compute
\ba\label{A-M-com}
&[A_1(\e D_x),M]= A_1(\e D_x) M-M A_1(\e D_x)\\
&=\sum_{p=\pm1}e^{-ip\o t/\e^2}\left( \l_j M_{jj'}^{(p)}- M_{jj'}^{(p)} \l_{j'}\right)_{1\leq j,j'\leq J}\\
&=\sum_{p=\pm1}e^{-ip\o t/\e^2}\left((\l_j-\l_{j'}) M_{jj'}^{(p)}\right)_{1\leq j,j'\leq J} + M_r^{(1)},\nn
\ea
where we used the simplified notation $\l_j:=\l_j(\e D_x),~j\in\{1,\cdots, J\}$ and
\be\label{def-Mr1}
M_r^{(1)}:=\sum_{p=\pm1}e^{-ip\o t/\e^2}\left([\l_{j'},M_{jj'}^{(p)}]\right)_{1\leq j,j'\leq J}.
\ee

We then compute
\be\label{dt-M}
\e^2 \d_t M =\sum_{p=\pm1}e^{-ip\o t/\e^2} (-ip\o )\left( M_{jj'}^{(p)}\right)_{1\leq j,j'\leq J}+ M_r^{(2)},\nn
\ee
where
\be\label{def-Mr2}
M_r^{(2)} :=\e^2 \sum_{p=\pm1}e^{-ip\o t/\e^2} \left( \d_t M_{jj'}^{(p)}\right)_{1\leq j,j'\leq J}.
\ee

Then
\ba\label{A-M-com-dt-M}
&i[A_1(\e D_x),M]+\e^2 \d_t M\\
&=\sum_{p=\pm1}e^{-ip\o t/\e^2}\left(i(\l_j-\l_{j'}-p\o) M_{jj'}^{(p)}\right)_{1\leq j,j'\leq J} + M_r^{(1)} + M_r^{(2)}.\nn
\ea

By \eqref{def-Bin-out} and Assumption \ref{ass-app}, we have
\ba\label{Bout-cal1}
&B_{out}=2\sum_{p=\pm1} e^{-ip\o t/\e^2}\left( (1-\chi_{j,j',p})(\e D_x) \Pi_j(\e D_x ) B(U_{0,p}) \Pi_{j'}(\e D_x)\right)_{1\leq j,j'\leq J}\\
&=2\sum_{p=\pm1} e^{-ip\o t/\e^2}\left( (1-\chi_{j,j',p})(\e D_x) \Pi_j(\e D_x ) B(e_p) \Pi_{j'}(\e D_x) \circ g_p\right)_{1\leq j,j'\leq J}+ M_r^{(3)}\nn
\ea
with
\be\label{def-Mr3}
 M_r^{(3)}:=2\sum_{p=\pm 1} e^{-ip\o t/\e^2} (1-\chi_{j,j',p})(\e D_x) \Pi_j(\e D_x) B(e_p) [g_p,\Pi_{j'}(\e D_x)].
\ee

Now we are ready to give the definitions of $\widetilde M_{jj'}^{(p)}(\xi)$:
\be\label{tilde-Mijp}
\widetilde M_{jj'}^{(p)}(\xi):=-2i(\l_j(\xi)-\l_{j'}(\xi)-p\o)^{-1} (1-\chi_{j,j',p})(\xi) \Pi_j(\xi) B(e_p) \Pi_{j'}(\xi).
\ee
We observe that such $\widetilde M_{jj'}^{(p)}(\xi)$ are well defined due to the localization away from resonances (see \eqref{def-cut-sing} and \eqref{def-Rh}). Moreover,  by the condition \eqref{sttrans-new} in Assumption \ref{ass-ps-tran} and the definition of the cut-off functions in \eqref{def-cut-sing}, we have
\be\label{est-tilde-Mijp}
\|\widetilde M_{jj'}^{(p)}(\xi)\|_{L^\infty_\xi}\leq C\, h_\e^{\a_{j,j',p}-1}.
\ee

Then for the operator $M$ defined in \eqref{M-form} with $M_{jj'}^{(p)}=\widetilde M_{jj'}^{(p)}(\e D_x) \circ g_{p}$, by Assumption \ref{ass-app} ($g_p\in C_b \left([0,\infty); H^{s+1}\right)\cap C_b^1 \left([0,\infty); H^{s}\right)$), we first have for any $d/2<\mu\leq s$ and any $u\in H^{\mu}$ that
\ba\label{est-M}
\left\|M u\right\|_{H^\mu} +\left\|(\d_t M) u\right\|_{H^\mu}\leq C\, \,h_\e^{\a-1}  \left\| u\right\|_{H^\mu}.
\ea

Moreover, direct computation gives
$$
B_1^t-i[A_1(\e D_x),M]-\e^2 \d_t M= \widetilde M_r
$$
with
$$
\widetilde M_r=- M_r^{(1)}-M_r^{(2)}+M_r^{(3)}.
$$
It is left to show the uniform bound for the operator $\widetilde M_r$.

Let $d/2<\mu\leq s$ and $u \in H^{\mu}$. We start estimating $M_r^{(1)}$.  By Lemma \ref{lem-bound} and Lemma \ref{lem-comm}, direct computation gives
\ba\label{est-Mr10}
&\left\|[\l_{j'},M_{jj'}^{(p)}] u\right\|_{H^\mu} =\left\|[\l_{j'}(\e D_x),\widetilde M_{jj'}^{(p)}(\e D_x) g_p]u\right\|_{H^\mu}\\
&\quad=\left\|\widetilde M_{jj'}^{(p)}(\e D_x)[\l_{j'}(\e D_x),g_p] u\right\|_{H^\mu}\\
&\quad\leq C\, h_\e^{\a-1}\left\|[\l_{j'}(\e D_x),g_p] u\right\|_{H^\mu}\\
&\quad\leq C\, \e\,h_\e^{\a-1} \|g_p\|_{L^\infty(0,\infty;H^{s+1})} \left\| u\right\|_{H^\mu}.
\ea
Then by the definition of $M_r^{(1)}$ in \eqref{def-Mr1}, we have
\ba\label{est-Mr1}
\left\|M_r^{(1)} u\right\|_{H^\mu} \leq C\, \e\,h_\e^{\a-1}  \left\| u\right\|_{H^\mu}.
\ea

Similarly, for $M_r^{(2)}$ and $M_r^{(2)}$ defined in \eqref{def-Mr2} and \eqref{def-Mr3}, by Lemma \ref{lem-bound} and Lemma \ref{lem-comm}, we deduce
\ba\label{est-Mr23}
&\left\|M_r^{(2)} u\right\|_{H^\mu} \leq C\, \e^2\,h_\e^{\a-1} \|\d_t g_p\|_{L^\infty(0,\infty;H^{s})} \left\| u\right\|_{H^\mu},\\
&\left\|M_r^{(3)} u\right\|_{H^\mu} \leq C\, \e\, \|g_p\|_{L^\infty(0,\infty;H^{s+1})} \left\| u\right\|_{H^\mu}.
\ea

Summing up the estimates in \eqref{est-Mr1} and \eqref{est-Mr23}, we obtain
\ba\label{est-Mr}
\left\|\widetilde M_r u\right\|_{H^\mu} \leq C\,( \e\,h_\e^{\a-1}+ \e^2\,h_\e^{\a-1}+\e) \left\| u\right\|_{H^\mu}.\nn
\ea

This completes the proof of Proposition \ref{prop:M}.

\end{proof}

\subsection{End of the proof}

In this section, we complete the proof of the first part of Theorem \ref{thm:general}. This is achieved by choosing $h_\e$ properly. First of all, we choose $h_\e$ such that
\be\label{choi-he10}
\e^2 h_\e^{\a-1} \to 0,\ \mbox{as $\e\to 0$}.
\ee
By \eqref{est-M}, we have for any $d/2<\mu\leq s$:
\ba\label{est-M1}
\left\|\e^2 M \right\|_{\mathcal{L}(H^\mu\to H^\mu)} \to 0,\ \mbox {as $\e\to 0$}.\nn
\ea
Then for $\e$ sufficient small,  the operator $\left(\Id +\e^2 M\right)$ is well defined and uniformly bounded from $H^\mu\to H^\mu$, and is invertible with a uniformly bounded inverse.

Thus, let $M$ be the operator determined in Proposition \ref{prop:M}, the change of variable \eqref{change1} is well defined. By Proposition \ref{prop:Bin} and Proposition \ref{prop:M}, the system \eqref{eq-dot-U2} in $\dot U_2$ becomes
\ba\label{eq-dot-U2-1}
&\d_t \dot U_2  + \frac{i}{\e^2} A_1(\e D_x) \dot  U_2 =(\e h_\e^{\a-1}+h_\e^{\a}+\e)\mathcal{R}_2,
\ea
where there hods the estimate for any $d/2<\mu\leq s$:
\be\label{est-R2}
\|\mathcal{R}_2(t,\cdot)\|_{H^\mu}\leq C \left(1+\|\dot U_2(t,\cdot)\|_{H^{\mu}}\right)\|\dot U_2(t,\cdot)\|_{H^{\mu}}.\nn
\ee

Finally $h_\e$ is chosen such that $\e h_\e^{\a-1}=h_\e^{\a}$ to achieve the smallest remainder. This is equivalent to
\be\label{choi-he1}
h_\e=\e,
\ee
which implies $\e h_\e^{\a-1}=h_\e^{\a}=\e^\a$. The condition \eqref{choi-he10} is also satisfied.

For the initial datum of $\dot U_2$, by \eqref{ini-dotU1} and \eqref{change1}, we have
\be\label{ini-dotU2}
\dot U_2(0)=(\Id +\e^2 M)^{-1} \bp \Pi_1 (\e D_x)\psi^\e\\ \vdots \\ \Pi_3(\e D_x) \psi^\e\ep
\ee
for which the $H^{s}$ norm is uniformly bounded in $\e$.

We consider another change of unknown corresponding to a rescalling in time:
\be\label{change3}
\dot U_3(t)=\dot U_2 (\e^{-\a}t).\nn
\ee
Then the equation and initial datum for $\dot U_3$ are
 \be \label{eq-dot-U3} \left\{ \begin{aligned} &\d_t \dot U_3  + \frac{i}{\e^{2+\a}} A_1(\e D_x) \dot  U_3 =\mathcal{R}_3,\\
 & \dot U_3(0)=\dot U_2(0),\end{aligned}\right.
 \ee
 where $\mathcal R_3(t):=(2+\e^{1-\a})\mathcal R_2(\e^{-\a}t)$ satisfies for any $d/2<\mu\leq s$:
  \be\label{est-R3}
\|\mathcal{R}_3(t,\cdot)\|_{H^\mu}\leq C (1+\|\dot U_3(t,\cdot)\|_{H^{\mu}})\|\dot U_3(t,\cdot)\|_{H^{\mu}}.\nn
\ee

Since $s>d/2$, then by the classical theory for the local-in-time well-posedness of symmetric hyperbolic systems (see for instance Chapter 2 of \cite{Ma} or Chapter 7 of \cite{Me}), there exists a unique local-in-time solution $\dot U_3\in L^\infty(0,T_1; H^{s})$ to Cauchy problem \eqref{eq-dot-U3} for some $T_1>0$ independent of $\e$.

Equivalently, there exists a unique solution $\dot U_2\in L^\infty(0,\frac{T_1}{\e^\a}; H^{s})$ to \eqref{eq-dot-U2-1}-\eqref{ini-dotU2}. We go back to $\dot U$ and obtain the well-posedness of \eqref{eq-dot-U} in  $L^\infty(0,\frac{T_1}{\e^\a}; H^{s})$. Since the approximate solution $U_a$ is globally well defined and uniformly bounded in $L^\infty(0,\infty;H^{s+1})$, we can reconstruct the solution $U$ for \eqref{00} in $L^\infty(0,\frac{T_1}{\e^{\a}}; H^{s})$ through \eqref{def-dot-U}. We then complete the proof for the first part of Theorem \ref{thm:general}.

 We now turn to prove the second part of Theorem \ref{thm:general}.

\section{Long time stability: Part II}\label{sec:pf-2nd}

This section is devoted to proving the second part of Theorem \ref{thm:general}, that is the stability of the approximate solution given in Assumption \ref{ass-app} over time $O(1/\e^{\a_1})$ with $\a_1=\min\{2\a,1\}$. We suppose Assumption \ref{ass-spec}, Assumption \ref{ass-app}, Assumption \ref{ass-ps-tran} and Assumption \ref{ass-ps-tran-add} are all satisfied.

 We will also employ the idea in Section \ref{sec:pf-1st}. The main idea of singular localization and normal form reduction is the same as in the proof of the first part in Section \ref{sec:pf-1st}. However, the analysis here is more delicate in order to achieve even longer time stability.

 There are two new key points. The first one is to define the singular decomposition $B_{in}$ and $B_{out}$, as well as the operator $M$ in the normal form reduction in such a way that we can avoid the commutator $M_{r}^{(1)}$ in \eqref{def-Mr1} which is of order $\e h_\e^{\a-1}$ (see \eqref{est-Mr10} and \eqref{est-Mr1}). This can be achieved by proper choice for the positions of $\chi_{j,j',p}$ in \eqref{B-jjp1}-\eqref{B-jjp3} and the positions of $g_p$ in \eqref{def-Qjj} which allows us to force the commutators in $\eqref{def-Mr1}$ to appear only associated with the constant eigenmode $\l_j$ or $\l_{j'}$ from Assumption \ref{ass-ps-tran-add}. The other remainders $M_{r}^{(2)}$ and $M_{r}^{(3)}$ are smaller of order $\e^2 h_\e^{\a-1}$ and $\e$ respectively (see \eqref{est-Mr23}) and we do not need to deal with them furthermore.

 The other key point is to avoid the commutators $[\chi_{j,j',p},g_p]$ which may be large because of the singularity of $\chi_{j,j',p}$ as $\e\to 0$. This can be also achieved by choosing the positions of cut-off functions $\chi_{j,j',p}$, $(1-\chi_{j,j',p})$ and scalar multiplier $g_p$ in the definitions of $B_{in}$, $B_{out}$ and the operator $M$ used in the normal form reduction, see \eqref{B-jjp1}-\eqref{B-jjp3} and \eqref{def-Qjj} later on.

\subsection{Refined singular localization}

The cut-off functions $\chi_{j,j',p}$ are the same as in Section \ref{sec:sin-loc}, while the definitions for the decomposition component $B_{in}$ and $B_{out}$ have to be modified.

For any $(j,j',p)$, we introduce the elements $B_{in}^{(j,j',p)}$ and $B_{out}^{(j,j',p)}$ in the following way:
\begin{itemize}
\item For any $(j,j',p)\in J_1$ which is defined in \eqref{def:J1}, we set
\be\label{B-jjp1}
B_{in}^{(j,j',p)}:=0,\quad B_{out}^{(j,j',p)}:=2\Pi_j(\e D_x ) B(U_{0,p}) \Pi_{j'}(\e D_x).
\ee

\item For any $(j,j',p)\not \in J_1$, by Assumption \ref{ass-ps-tran-add}, one of $\l_j(\xi)$ and $\l_{j'}(\xi)$ is constant.

If $\l_j(\xi)$ is constant, we set
\ba\label{B-jjp2}
& B_{in}^{(j,j',p)}:=2\Pi_j(\e D_x ) B(U_{0,p}) \Pi_{j'}(\e D_x)\chi_{j,j',p}(\e D_x),\\
& B_{out}^{(j,j',p)}:=2\Pi_j(\e D_x ) B(U_{0,p}) \Pi_{j'}(\e D_x)(1-\chi_{j,j',p})(\e D_x).
\ea

If $\l_{j'}(\xi)$ is constant, we set
\ba\label{B-jjp3}
& B_{in}^{(j,j',p)}:=2\chi_{j,j',p}(\e D_x)\Pi_j(\e D_x ) B(U_{0,p}) \Pi_{j'}(\e D_x),\\
& B_{out}^{(j,j',p)}:=2(1-\chi_{j,j',p})(\e D_x)\Pi_j(\e D_x ) B(U_{0,p}) \Pi_{j'}(\e D_x).
\ea
\end{itemize}

The new decomposition of $B_1:=\widetilde B_{in}+\widetilde B_{out}$ is defined as
\ba\label{def-Bin-out-new}
&\widetilde B_{in}:=\sum_{p=\pm 1}e^{-ip\o t/\e^2}\left(B_{in}^{(j,j',p)}\right)_{1\leq j,j'\leq J},\\
&\widetilde B_{out}:=\sum_{p=\pm 1}e^{-ip\o t/\e^2}\left(B_{out}^{(j,j',p)}\right)_{1\leq j,j'\leq J}.
\ea

We remark that, compared to $B_{in}$ and $B_{out}$ defined before in \eqref{def-Bin-out}, the new definition through \eqref{B-jjp1}-\eqref{def-Bin-out-new} pays more attention to the positions of the cut-offs $\chi_{j,j',p}$. We will see later on in the proof of Proposition \ref{prop:Q}, this choice of positions, together with the choice of positions of $g_p$ in \eqref{def-Qjj}, allows us to avoid relatively large commutators of order $\e h_\e^{\a-1}$ as well as the commutators $[\chi_{j,j',p},g_p]$.

First of all, similar to  Proposition \ref{prop:Bin}, we have:
\begin{prop}\label{prop:Bin-new}
Let $d/2<\mu\leq s$ and $V\in {H^\mu}$.  There holds
\be\label{Bin-small-new}
\|\widetilde B_{in} V\|_{H^\mu}\leq C\,(h_\e^\a+\e) \, \|V\|_{H^\mu}.\nn
\ee
\end{prop}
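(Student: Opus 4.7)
The plan is to mirror the proof of Proposition \ref{prop:Bin}, estimating $\widetilde B_{in}$ term by term. For $(j,j',p) \in J_1$ the corresponding summand $B_{in}^{(j,j',p)}$ vanishes by \eqref{B-jjp1}, so only $(j,j',p) \notin J_1$ needs attention. For these triples, Assumption \ref{ass-ps-tran-add} guarantees that either $\l_j$ or $\l_{j'}$ is identically constant; this dichotomy dictates the placement of $\chi_{j,j',p}(\e D_x)$ in \eqref{B-jjp2}--\eqref{B-jjp3}. I would treat in detail the case $\l_{j'} \equiv \text{const}$ (so \eqref{B-jjp3} applies); the other case is symmetric.

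The first step is to substitute $U_{0,p}=g_p e_p$ from \eqref{ass-U01} and exploit bilinearity to pull the constant matrix $B(e_p)$ outside, writing $B(U_{0,p})W = g_p\, B(e_p) W$. Next, I would move the scalar multiplier $g_p$ through $\Pi_{j'}(\e D_x)$ to obtain
\[
B_{in}^{(j,j',p)} V = 2\chi_{j,j',p}(\e D_x)\Pi_j(\e D_x) B(e_p)\Pi_{j'}(\e D_x)(g_p V) + 2\chi_{j,j',p}(\e D_x)\Pi_j(\e D_x) B(e_p) [g_p,\Pi_{j'}(\e D_x)] V.
\]
The three Fourier multipliers and the constant matrix $B(e_p)$ in the first term combine into a single semiclassical multiplier with symbol $2\chi_{j,j',p}(\xi)\Pi_j(\xi) B(e_p)\Pi_{j'}(\xi)$; by Assumption \ref{ass-ps-tran} together with $\supp \chi_{j,j',p} \subset R_{j,j',p}^{2h_\e}$, and using $\alpha \leq \alpha_{j,j',p}$ with $h_\e < 1$, its $L^\infty$ norm is at most $C h_\e^\alpha$. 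Lemma \ref{lem-bound} plus the $H^\mu$ boundedness of $g_p$ from Assumption \ref{ass-app} then gives the first term a bound $C h_\e^\alpha \|V\|_{H^\mu}$. For the commutator term, the leading factor is uniformly bounded on $H^\mu$, while Lemma \ref{lem-comm} applied to $\Pi_{j'} \in S^0$ supplies the $O(\e)$ bound, yielding $C\e\|V\|_{H^\mu}$.

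There is no deep obstacle here; the statement is a mild refinement of Proposition \ref{prop:Bin}. The one subtlety that requires care---and that motivates the delicate positioning of $\chi_{j,j',p}$ in \eqref{B-jjp2}--\eqref{B-jjp3}---is that one must avoid ever commuting the cut-off $\chi_{j,j',p}$ against the scalar $g_p$. Since $h_\e$ will shrink with $\e$ (see \eqref{choi-he1}), the gradient $\nabla_\xi \chi_{j,j',p}$ blows up as $\e \to 0$, so a commutator $[\chi_{j,j',p}(\e D_x), g_p]$ would not yield an $O(\e)$ factor via Lemma \ref{lem-comm}. The chosen placement keeps $\chi_{j,j',p}$ adjacent only to Fourier multipliers with which it commutes trivially, and forces $g_p$ to be moved only across the smooth symbol $\Pi_{j'} \in S^0$ (or $\Pi_j$ in the symmetric case), for which Lemma \ref{lem-comm} does yield the required $O(\e)$ bound. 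Summing the two contributions over the finitely many indices $(j,j',p)$ then produces the desired estimate.
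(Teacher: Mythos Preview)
Your proposal is correct and follows exactly the approach the paper indicates: the paper states that the proof is ``similar as that of Proposition \ref{prop:Bin}'' using Lemma \ref{lem-bound}, Lemma \ref{lem-comm}, Assumption \ref{ass-app}, Assumption \ref{ass-ps-tran}, and the cut-off properties, and omits the details. Your write-up supplies precisely those details, including the correct handling of the symmetric case \eqref{B-jjp2} by moving $g_p$ across $\Pi_j$ rather than $\Pi_{j'}$; your closing remark about why one must never commute $\chi_{j,j',p}(\e D_x)$ with $g_p$ is accurate, though its real payoff comes in Proposition \ref{prop:Q} rather than here.
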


The proof of Proposition \ref{prop:Bin-new} is similar as that of Proposition \ref{prop:Bin}, that is to employ Lemma \ref{lem-bound} and Lemma \ref{lem-comm}, Assumption \ref{ass-app} and Assumption \ref{ass-ps-tran}, and the property of the cut-off functions $\chi_{j,j',p}$  in \eqref{def-cut-sing}. We do not repeat the details.

\subsection{Refined normal form reduction}

We employ the normal reduction method to deal with $\widetilde B_{out}$ which is localized away from resonance. Introduce the  change of variable
\be\label{NF-change-new}
\dot U_4=\left(\Id +\e^2 Q\right)^{-1} \dot U_1,
\ee
where $U_1$ is given in \eqref{def-dot-U1} and solves \eqref{eq-dot-U1-1},  $Q$ is an operator of the form
\be\label{Q-form}
Q=\sum_{p=\pm1}e^{-ip\o t/\e^2}\left( Q_{jj'}^{(p)}\right)_{1\leq j,j'\leq J}
\ee
with the operator elements $Q_{jj'}^{(p)}$ to be determined. Then, by \eqref{eq-dot-U1-1}, the system in $\dot U_4$ has the form
\ba\label{eq-dot-U4}
&\d_t \dot U_4  + \frac{i}{\e^2} A_1(\e D_x) \dot  U_4 =\left(\Id +\e^2 Q\right)^{-1}\left(\widetilde B_{out}-i[A_1(\e D_x),Q]-\e^2 \d_t Q\right) \dot U_4 \\
&\qquad+\left(\Id +\e^2 Q\right)^{-1}\left(\e^2 \widetilde B_{out} Q \dot U_4+ \widetilde B_{in}\left(\Id +\e^2 Q\right) \dot U_4 +\e \mathcal{R}_1\right).
\ea
One key result is the following:
\begin{prop}\label{prop:Q}
Let $\widetilde M_{jj'}^{(p)}(\xi)$ be the symbols defined in \eqref{tilde-Mijp}. Then the operator $Q$ defined in \eqref{Q-form} with
\ba\label{def-Qjj}
&Q_{jj'}^{(p)}:=g_{p}\circ \widetilde M_{jj'}^{(p)}(\e D_x) \ {\rm or}\  \widetilde M_{jj'}^{(p)}(\e D_x)\circ g_{p},\quad \mbox{if $(j,j',p)\in J_1$},\\
&Q_{jj'}^{(p)}:=g_{p}\circ \widetilde M_{jj'}^{(p)}(\e D_x), \quad \mbox{if $(j,j',p)\not\in J_1$ and $\l_{j}(\cdot)$ is constant},\\
&Q_{jj'}^{(p)}:= \widetilde M_{jj'}^{(p)}(\e D_x)\circ g_{p}, \quad \mbox{if $(j,j',p)\not\in J_1$ and $\l_{j'}(\cdot)$ is constant}
\ea
 satisfies
\be\label{prop-eq-Q}
\widetilde B_{out}-i[A_1(\e D_x),Q]-\e^2 \d_t Q=(\e^2 \,h_\e^{\a-1}+\e) Q_r,\nn
\ee
where  $Q_r$ is a linear operator satisfying the estimate:
\be\label{est-Qr}
 \|Q_r V\|_{H^\mu}\leq C \,\|V\|_{H^\mu},\quad \mbox{for any $d/2<\mu\leq s$ and $V\in H^{\mu}$}.
\ee
\end{prop}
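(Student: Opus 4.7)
The plan is to verify the identity in \eqref{prop-eq-Q} by a direct, block-by-block expansion, exploiting the defining relation for $\widetilde M_{jj'}^{(p)}(\xi)$ from \eqref{tilde-Mijp}, namely
$$
i\bigl(\l_j(\xi)-\l_{j'}(\xi)-p\o\bigr)\,\widetilde M_{jj'}^{(p)}(\xi) \,=\, 2(1-\chi_{j,j',p})(\xi)\,\Pi_j(\xi)\,B(e_p)\,\Pi_{j'}(\xi),
$$
which lifts to an identity of Fourier multipliers. Since $A_1$ is diagonal, the $(j,j')$-entry of the $p$-harmonic of $i[A_1(\e D_x),Q]+\e^2\d_t Q$ equals
$$
i\bigl(\l_j(\e D_x)\,Q_{jj'}^{(p)}-Q_{jj'}^{(p)}\,\l_{j'}(\e D_x)\bigr) - ip\o\, Q_{jj'}^{(p)} + \e^2\d_t Q_{jj'}^{(p)}.
$$
The $\e^2\d_t Q_{jj'}^{(p)}$ piece equals $\e^2(\d_t g_p)\widetilde M_{jj'}^{(p)}(\e D_x)$ or $\e^2\widetilde M_{jj'}^{(p)}(\e D_x)(\d_t g_p)$, so by \eqref{est-tilde-Mijp} and the uniform $H^s$-bound on $\d_t g_p$ from Assumption \ref{ass-app}, its operator norm on $H^\mu$ is $O(\e^2 h_\e^{\a-1})$, which contributes to the targeted remainder.

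Next I would treat the three cases in \eqref{def-Qjj} separately. Consider first $(j,j',p)\not\in J_1$ with $\l_j$ identically constant and $Q_{jj'}^{(p)}=g_p\circ\widetilde M_{jj'}^{(p)}(\e D_x)$. Since $[\l_j(\e D_x),g_p]=0$, the standard rearrangement yields
$$
i\bigl(\l_j Q_{jj'}^{(p)} - Q_{jj'}^{(p)}\l_{j'}\bigr) - ip\o\, Q_{jj'}^{(p)} \,=\, g_p\cdot i(\l_j-\l_{j'}-p\o)(\e D_x)\,\widetilde M_{jj'}^{(p)}(\e D_x) \,=\, 2\,g_p\,(1-\chi_{j,j',p})(\e D_x)\,\Pi_j(\e D_x)\,B(e_p)\,\Pi_{j'}(\e D_x).
$$
Comparing with $B_{out}^{(j,j',p)}$ from \eqref{B-jjp2} and commuting the scalar Fourier multiplier $(1-\chi_{j,j',p})(\e D_x)$ freely, the discrepancy collapses to the commutator $2[\Pi_j(\e D_x),g_p]\circ B(e_p)\Pi_{j'}(\e D_x)(1-\chi_{j,j',p})(\e D_x)$, which Lemma \ref{lem-comm} bounds by $C\e$ on $H^\mu$ since $\Pi_j\in S^0$. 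The mirror case ($\l_{j'}$ constant, $Q_{jj'}^{(p)}=\widetilde M_{jj'}^{(p)}(\e D_x)\circ g_p$) follows from the symmetric computation: now $[\l_{j'}(\e D_x),g_p]=0$ kills the dangerous inner commutator, and the leftover is $[\Pi_{j'}(\e D_x),g_p]$, still $O(\e)$. For $(j,j',p)\in J_1$, the bound \eqref{est-tilde-Mijp} with $\a_{j,j',p}=1$ gives $\|\widetilde M_{jj'}^{(p)}\|_{L^\infty}\leq C$ uniformly in $\e$, so any stray $\e$-order commutator $[\l_\cdot(\e D_x),g_p]$ composed with $\widetilde M_{jj'}^{(p)}(\e D_x)$ still only produces an $O(\e)$ error, regardless of the placement chosen in \eqref{def-Qjj}.

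Assembling the block estimates across all $(j,j',p)$ and both harmonics $p=\pm 1$, invoking Lemma \ref{lem-bound}, Lemma \ref{lem-comm} and the uniform bounds on $g_p,\d_t g_p$ from Assumption \ref{ass-app}, yields the desired decomposition with a remainder of the form $(\e^2 h_\e^{\a-1}+\e) Q_r$, where $Q_r$ is a linear operator whose $H^\mu\to H^\mu$-norm is controlled by a constant independent of $\e$ for every $d/2<\mu\leq s$. This is exactly \eqref{est-Qr}.

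The main obstacle is structural rather than computational: one must preclude the appearance of the commutator $\widetilde M_{jj'}^{(p)}(\e D_x)\cdot[\l_\cdot(\e D_x),g_p]$ (or its mirror), which is of size $O(\e\cdot h_\e^{\a-1})$ and would force us back to the coarser remainder of Proposition \ref{prop:M}, wrecking the improved stability time scale. Avoiding it is precisely what dictates the placement rules in \eqref{def-Qjj}, coordinated with the placement of $\chi_{j,j',p}$ in the refined decomposition \eqref{B-jjp2}--\eqref{B-jjp3}: Assumption \ref{ass-ps-tran-add} guarantees that in every non-transparent block one of $\l_j,\l_{j'}$ is constant, and placing the scalar $g_p$ on the side corresponding to that constant eigenmode makes the inner $[\l,g_p]$ commutator vanish identically. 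The residual commutators then involve only the bounded symbols $\Pi_j$ or $\Pi_{j'}$, not the singular $\widetilde M_{jj'}^{(p)}$, and hence stay at the harmless $O(\e)$ level.
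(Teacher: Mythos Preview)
Your proposal is correct and follows essentially the same approach as the paper's proof: you isolate the $\e^2\d_t Q$ contribution as $O(\e^2 h_\e^{\a-1})$, then treat the three cases in \eqref{def-Qjj} separately, using in the non-$J_1$ cases the constancy of $\l_j$ or $\l_{j'}$ to annihilate the dangerous $[\l_\cdot(\e D_x),g_p]$ commutator and leave only the $O(\e)$ commutator involving $\Pi_j$ or $\Pi_{j'}$, and in the $J_1$ case using $\a_{j,j',p}=1$ so that $\widetilde M_{jj'}^{(p)}$ is uniformly bounded. The paper organizes this identically, merely labeling the individual remainder pieces $Q_{r,1},\dots,Q_{r,7}$ explicitly.
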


\begin{rem}
We observe that, compared to Proposition \ref{prop:M}, the remainder estimate in \eqref{est-Qr} is better than that in \eqref{est-Mr0} where the term $\e h_\e^{\a-1}$ is eliminated.
\end{rem}
\begin{proof} [Proof of Proposition \ref{prop:Q}]
First of all, the operator $Q$ given in Proposition \ref{prop:Q} is well defined and satisfies for any $d/2<\mu\leq s$ and $V\in H^{\mu}$:
\be\label{est-Q}
 \|Q V\|_{H^\mu}+ \|(\d_t Q) V\|_{H^\mu} \leq C \,h_\e^{\a-1}\,\|V\|_{H^\mu}.
\ee

Given $Q$ of the form \eqref{Q-form}, we compute
\ba\label{A-Q-com}
&[A_1(\e D_x),Q]= A_1(\e D_x) Q - Q A_1(\e D_x)\\
&=\sum_{p=\pm1}e^{-ip\o t/\e^2}\left( \l_j Q_{jj'}^{(p)}- Q_{jj'}^{(p)} \l_{j'}\right)_{1\leq j,j'\leq J},\nn
\ea
where we used the simplified notation $\l_j:=\l_j(\e D_x),~j\in\{1,\cdots, J\}$.

We then compute
\be\label{dt-Q}
\e^2 \d_t Q =\sum_{p=\pm1}e^{-ip\o t/\e^2} (-ip\o )\left( Q_{jj'}^{(p)}\right)_{1\leq j,j'\leq J}+ Q_{r,1},\nn
\ee
where
\be\label{def-Qr1}
Q_{r,1} :=\e^2 \sum_{p=\pm1}e^{-ip\o t/\e^2} \left( \d_t Q_{jj'}^{(p)}\right)_{1\leq j,j'\leq J}.\nn
\ee
Direct computation gives that for any $d/2<\mu\leq s$ and $V\in H^{\mu}$:
\be\label{est-Qr1}
 \|Q_{r,1} V\|_{H^\mu} \leq C \,\e^2 h_\e^{\a-1}\,\|V\|_{H^\mu}.
\ee

Then
\ba\label{A-Q-com-dt-Q}
\widetilde B_{out}-i[A_1(\e D_x),Q]-\e^2 \d_t Q=\sum_{p=\pm1}e^{-ip\o t/\e^2}\left( Q_{j,j',p}^{(r)} \right)_{1\leq j,j'\leq J} - Q_{r,1},\nn
\ea
where
\be\label{def-Qrjjp}
Q_{j,j',p}^{(r)}:= B_{out}^{(j,j',p)}-i \left( \l_j Q_{jj'}^{(p)}- Q_{jj'}^{(p)}  \l_{j'}-p\o Q_{jj'}^{(p)}\right).\nn
\ee
We now estimate $Q_{j,j',p}^{(r)}$ case by case:

\begin{itemize}

\item For $(j,j',p)\in J_1$, we choose $Q_{jj'}^{(p)}=g_{p}\circ \widetilde M_{jj'}^{(p)}(\e D_x)$ (the same result follows if we choose $Q_{jj'}^{(p)}= \widetilde M_{jj'}^{(p)}(\e D_x)\circ g_{p}$), then
\ba\label{est-Qrjjp1}
Q_{j,j',p}^{(r)}&=2\Pi_j(\e D_x ) B(g_p e_p) \Pi_{j'}(\e D_x) -2 g_p \Pi_j(\e D_x ) B(e_p) \Pi_{j'}(\e D_x)+Q_{r,2}^{j,j',p}\\
&=Q_{r,2}^{j,j',p}+Q_{r,3}^{j,j',p},\nn
\ea
where
\be\label{def-Qr23jjp}
Q_{r,2}^{j,j',p} := -2 [\l_j,g_p]\widetilde M_{jj'}^{(p)}(\e D_x),\quad Q_{r,3}^{j,j',p} :=2[\Pi_j(\e D_x ),g_p] B( e_p) \Pi_{j'}(\e D_x).\nn
\ee
For any $(j,j',p)\in J_1$, $\a_{j,j',p}=1$. Then by \eqref{est-tilde-Mijp} and the estimates in Lemma \ref{lem-bound} and \eqref{lem-comm}, we have for any $d/2<\mu\leq s$ and $u\in H^{\mu}$:
\be\label{est-Qr23jjp}
 \|Q_{r,2}^{j,j',p} u\|_{H^\mu} + \|Q_{r,3}^{j,j',p} u\|_{H^\mu}\leq C \e\,\|u\|_{H^\mu}.
\ee

\item For $(j,j',p)\not \in J_1$ with $\l_j(\cdot)$ constant, we have
\ba\label{est-Qrjjp2}
Q_{j,j',p}^{(r)}&=2\Pi_j(\e D_x ) B(g_p e_p) \Pi_{j'}(\e D_x)(1-\chi_{j,j',p})(\e D_x) \\
&\quad-2 g_p \Pi_j(\e D_x ) B(e_p) \Pi_{j'}(\e D_x) (1-\chi_{j,j',p})(\e D_x) +Q_{r,4}^{j,j',p}\\
&=Q_{r,4}^{j,j',p}+Q_{r,5}^{j,j',p},\nn
\ea
where
\ba\label{def-Qr45jjp}
& Q_{r,4}^{j,j',p} := -2 [\l_j,g_p]\widetilde M_{jj'}^{(p)}(\e D_x)=0\\
& Q_{r,5}^{j,j',p} :=2[\Pi_j(\e D_x ),g_p] B( e_p) \Pi_{j'}(\e D_x)(1-\chi_{j,j',p})(\e D_x),\nn
\ea
where we used the fact $\l_j$ is constant. Again by the estimates in Lemma \ref{lem-bound} and Lemma \ref{lem-comm}, we have for any $d/2<\mu\leq s$ and $u\in H^{\mu}$:
\be\label{est-Qr45jjp}
 \|Q_{r,5}^{j,j',p} u\|_{H^\mu} \leq C \e\,\|u\|_{H^\mu}.
\ee

\item For $(j,j',p)\not \in J_1$ with $\l_{j'}(\cdot)$ constant, we have
\ba\label{est-Qrjjp3}
Q_{j,j',p}^{(r)}&=2(1-\chi_{j,j',p})(\e D_x) \Pi_j(\e D_x ) B(g_p e_p) \Pi_{j'}(\e D_x)\\
&\quad-2 (1-\chi_{j,j',p})(\e D_x)  \Pi_j(\e D_x ) B(e_p) \Pi_{j'}(\e D_x) g_p +Q_{r,6}^{j,j',p}\\
&=Q_{r,6}^{j,j',p}+Q_{r,7}^{j,j',p},\nn
\ea
where
\ba\label{def-Qr67jjp}
& Q_{r,6}^{j,j',p} := -2 \widetilde M_{jj'}^{(p)}(\e D_x) [\l_{j'},g_p]=0\\
& Q_{r,7}^{j,j',p} :=2(1-\chi_{j,j',p})(\e D_x) \Pi_{j}(\e D_x) B( e_p) [g_p,\Pi_{j'}(\e D_x)],\nn
\ea
where we used the fact $\l_{j'}$ is constant. Moreover, for any $d/2<\mu\leq s$ and $u\in H^{\mu}$ there holds
\be\label{est-Qr67jjp}
 \|Q_{r,7}^{j,j',p} u\|_{H^\mu} \leq C \e\,\|u\|_{H^\mu}.
\ee

\end{itemize}

By the estimates in \eqref{est-Qr1}, \eqref{est-Qr23jjp}, \eqref{est-Qr45jjp} and \eqref{est-Qr67jjp}, we conclude our result in Proposition \ref{prop:Q}.
\end{proof}

\subsection{End of the proof}

In this section, we complete the proof of the first part of Theorem \ref{thm:general}. Let $h_\e$ be such that
\be\label{choi-he20}
\e^2 h_\e^{\a-1} \to 0,\ \mbox{as $\e\to 0$}.
\ee

Then by \eqref{est-Q} for $\e$ sufficient small,  the operator $\left(\Id +\e^2 Q\right)$ as well as its inverse are uniformly bounded from $H^\mu\to H^\mu$. Thus, the change of variable \eqref{NF-change-new} is well defined. By Proposition \ref{prop:Bin-new} and Proposition \ref{prop:Q}, the system \eqref{eq-dot-U4} in $\dot U_4$ becomes
\ba\label{eq-dot-U4-1}
&\d_t \dot U_4  + \frac{i}{\e^2} A_1(\e D_x) \dot  U_4 =(\e^2 h_\e^{\a-1}+h_\e^{\a}+\e)\mathcal{R}_4,\nn
\ea
where there holds the estimate for any $d/2<\mu\leq s$:
\be\label{est-R4}
\|\mathcal{R}_4(t,\cdot)\|_{H^\mu}\leq C (1+\|\dot U_4(t,\cdot)\|_{H^{\mu}})\|\dot U_4(t,\cdot)\|_{H^{\mu}}.\nn
\ee

Finally $h_\e$ is chosen such that $\e^2 h_\e^{\a-1}=h_\e^{\a}$ to achieve the smallest remainder. This suggests
\be\label{choi-he2}
h_\e=\e^2.
\ee
This implies $\e^2 h_\e^{\a-1}=h_\e^{\a}=\e^{2\a}$. The condition \eqref{choi-he20} is also satisfied.

For the initial datum of $\dot U_4$, by \eqref{ini-dotU1} and \eqref{change1}, we have
\be\label{ini-dotU4}
\dot U_4(0)=(\Id +\e^2 Q)^{-1} \bp \Pi_1 (\e D_x)\psi^\e\\ \vdots \\ \Pi_3(\e D_x) \psi^\e\ep\nn
\ee
for which the $H^{s}$ norm is uniformly bounded in $\e$.

We consider another change of unknown corresponding to a rescalling in time:
\be\label{change45}
\dot U_5(t)=\dot U_4 (\e^{-\a_1}t),\quad \a_1:=\min\{2\a,1\}.\nn
\ee
Then the equation and initial datum for $\dot U_5$ are
 \be \label{eq-dot-U5} \left\{ \begin{aligned} &\d_t \dot U_5  + \frac{i}{\e^{2+\a_1}} A_1(\e D_x) \dot  U_5 =\mathcal{R}_5,\\
 & \dot U_5(0)=\dot U_4(0),\end{aligned}\right.
 \ee
 where $\mathcal R_5(t):=(2\,\e^{2\a-\a_1}+\e^{1-\a_1})\mathcal R_4(\e^{-\a_1}t)$ satisfies for any $d/2<\mu\leq s$:
  \be\label{est-R5}
\|\mathcal{R}_5(t,\cdot)\|_{H^\mu}\leq C \left(1+\|\dot U_5(t,\cdot)\|_{H^{\mu}}\right)\|\dot U_5(t,\cdot)\|_{H^{\mu}}.\nn
\ee

Then the classical theory gives the local-in-time existence and uniqueness of the solution $\dot U_5\in L^\infty(0,T_2; H^{s})$ to Cauchy problem \eqref{eq-dot-U5} for some $T_2>0$ independent of $\e$. Going back to $\dot U$ gives the well-posedness of \eqref{eq-dot-U} in  $L^\infty(0,\frac{T_2}{\e^{\a_1}}; H^{s})$. Since the approximate solution $U_a$ is globally well defined and uniformly bounded in $L^\infty(0,\infty;H^{s+1})$, we can reconstruct the solution $U$ for \eqref{00} in $L^\infty(0,\frac{T_2}{\e^{\a_1}}; H^{s})$ through \eqref{def-dot-U}. We then complete the proof for the second part of Theorem \ref{thm:general}.

\medskip

Now we have finished the proof of Theorem \ref{thm:general}. In the next section, we apply our result to the study of non-relativistic limit problems of Klein-Gordon equations.

\section{Example and application}\label{sec:motivation}

Our example contains the non-relativistic limit problems of Klein-Gordon equations. The Klein-Gordon equation is a relativistic version of the Schr\"odinger equation and is used to describe the  motion of a spinless particle with positive mass $m>0$. Let $c$ be the speed of light, $h$ be the Planck constant, then the typical form of the Klein-Gordon equation is
  \be\label{kg}
  \frac{h^2}{mc^2}\d_{tt} u-\frac{h^2}{m}\Delta u+m c^2 u =f(u),\quad t\geq 0,\quad x\in \R^d.\nn
  \ee
 Here $u=u(t,x)$ is a real-valued (or complex-valued) field, and $f(u)$ is a real-valued (or complex-valued) function.  By normalizing the mass such that $m=1$ and rescaling the time and space variables as
 $$
 \tilde u (t,x):=u(h^{-1}t,h^{-1}x),
 $$
 and by introducing $\e = c^{-1}$, we arrive at the following non-dimensional form of the Klein-Gordon equation
 \be\label{0}
\e^2\d_{tt} u-\Delta u+\frac{1}{\e^2}u =f(u),\quad t\geq 0,\quad x\in \R^d.
\ee
Here in \eqref{0}, we denote the new unknown $\tilde u$ still by the original notation $u$.

For fixed $\e$, the well-posedness of the Klein-Gordon equation is well studied (see for instance \cite{GV1, GV2}). Our concern is the \emph{long time} asymptotic behavior of the solution in the non-relativistic limit ($\e \to 0$)
with real initial data of the form
\be\label{ini-0}
 u(0)=u_{0,\e},\quad (\d_t u)(0)=\frac{1}{\e^2}u_{1,\e}.
\ee

The \emph{local-in-time} asymptotic behavior in the non-relativistic limit of \eqref{0}-\eqref{ini-0} is well studied both in mathematical analysis and in numerical computations, see for instance \cite{MN2, BD} and the recent result concerning higher order approximation by the authors in \cite{LZ1}. However, concerning the long time (for example of order $O(1/\e)$) asymptotic behavior in this setting, there are few results according to the authors' knowledge.

\subsection{Setting and main result}

With quadratic nonlinearity $f(u)=\l u^2,\ \l \in \R$, we will show that up to a change of unknowns, the Klein-Gordon equation \eqref{0} can be treated as a system of the form \eqref{00}. With additional regularity assumption on the initial data in \eqref{ini-0}, we verify that Assumption \ref{ass-spec}, Assumption \ref{ass-app}, Assumption \ref{ass-ps-tran} and Assumption \ref{ass-ps-tran-add} are all satisfied. Hence, we can apply Theorem \ref{thm:general} to obtain long time $O(1/\e)$ stability property.  Moreover, the leading term of the approximate solution solves linear Schr\"odinger equation. This shows rigourously that over long time of order $O(1/\e)$, the quadratic Klein-Gordon equation can be well approximated by the linear Schr\"odinger equation in the non-relativistic regime $\e \to 0$. However, this example model is rather non-physical since physical nonlinearities are of the form $f(u)= g(|u|^2) u$ which fulfills the gauge invariance. An extension of the theory presented in this paper to non quadratic nonlinearities is needed to consider such physical nonlinearities.

\medskip

We state our result.

\begin{theo}\label{thm:main}
Assume that the real initial datum $(u_{0,\e}, u_{1,\e})$   has the form
\be\label{ini-data0}
u_{0,\e}=\varphi_{0}+\e \varphi_\e,\quad u_{1,\e}=\psi_{0}+\e \psi_{\e}\nn
\ee
 with
\ba\label{reg-ini}
&(\varphi_0,\psi_0) \in (H^s)^2 \quad \mbox{independent of $\e$},\\
&\big\{(\varphi_\e,\psi_\e,\e\nabla \varphi_\e)\big\}_{0<\e<1} \mbox{~uniformly bounded in $(H^{s-4})^{d+2}$}
\ea
for  some $s>d/2+4$. Then there exists $\e_0>0$ such that for any $0<\e<\e_0$ the Cauchy problem  \eqref{0}-\eqref{ini-0} with $f(u)=\l\, u^2,~\l\in \R$ admits a unique solution $u\in L^\infty\left(0,\frac{T}{\e}; H^{s-4}\right)$
for some $T>0$ independent of $\e$. Moreover,  there exists a constant $C$ independent of $\e$ such that
\be\label{error-est10}
\left\|u-\left(e^{-it/\e^2}v +e^{it/\e^2} \bar v\right)\right\|_{L^\infty\left(0,\frac{T}{\e}; H^{s-4}\right)}\leq C\, \e,\nn
\ee
where $v\in C_b(0,\infty;H^s)\cap C_b^1(0,\infty;H^{s-2})$ is the solution to the following Cauchy problem associate with the linear Schr\"odinger equation
\be\label{g0-sch1}
 2iv_t+\Delta v=0,\quad v(0)=\frac{\varphi_0+i \psi_0}{2}.
 \ee

\end{theo}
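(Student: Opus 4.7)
The plan is to reduce \eqref{0} to a symmetric hyperbolic system of the form \eqref{00}, to construct a WKB approximate solution whose leading amplitude solves a free Schr\"odinger equation, and to verify Assumptions \ref{ass-spec}--\ref{ass-ps-tran-add} with partially strong transparency exponent $\a=1/2$; since then $\a_1=\min\{2\a,1\}=1$, Theorem \ref{thm:general} will yield precisely the $O(1/\e)$ time scale.

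For the reduction I would introduce $U=(u,\,\e^2\partial_t u,\,\e\nabla u)\in\R^{d+2}$, turning \eqref{0} into a system of the shape \eqref{00} in which $A_0$ acts as a $\pi/2$-rotation on the $(U_1,U_2)$-block and vanishes on the last $d$ coordinates (skew-symmetric, eigenvalues $\{+i,-i,0^{(d)}\}$), $A(\partial_x)$ symmetrically couples $U_2$ to $\nabla\cdot U_3$ and $U_3$ to $\nabla U_2$, and $B(U,V)=\lambda U_1 V_1 \hat e_2$. The hypothesis \eqref{reg-ini} gives $U(0)=U_a(0)+\e\psi^\e$ with $\psi^\e$ uniformly bounded in $H^{s-4}$, the four derivatives being lost through the $\e\nabla\varphi_\e$ component. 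The hermitian symbol $A(\xi)+A_0/i$ has the smooth spectrum $\{\pm\sqrt{|\xi|^2+1},\,0^{(d)}\}$, with $\lambda_\pm\in S^1$ and $\lambda_0\in S^0$, so Assumption \ref{ass-spec} holds; the dispersion relation selects $\o=1$ with $\ker(-i+A_0)$ generated by $e_1=(1,-i,0_d)^T/\sqrt 2$.

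Next I would carry out the WKB expansion via Proposition \ref{prop:Ua}. Conditions 1--3 are direct: Condition 2 holds because $A(\xi)e_{\pm 1}$ sits in the $U_3$-block while $e_{\pm 1}$ sits in the $(U_1,U_2)$-block; Condition 3 holds because $B(\pi_{p_1},\pi_{p_2})$ always produces a vector along $\hat e_2$, which is either killed by $\pi_0$ (as $\hat e_2\perp\ker A_0$) or meets an argument in $\ker A_0$ with vanishing first component. Condition 4 can be checked by evaluation on the constructed profile: matching the initial data and working through the zero-harmonic sector forces $U_{0,0}\equiv U_{1,0}\equiv 0$, and this makes the only potentially obstructing contributions in the second identity of Condition 4 vanish. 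The solvability at order $\e^0$ for harmonic $1$ reduces to $\partial_t g_1\, e_1 = \pi_1 A(\partial_x)L_1^{-1}A(\partial_x)(g_1 e_1)$, and a symbol computation using $A(\xi)e_1=(i/\sqrt 2)(0,0,\xi)^T\in\ker A_0$ (on which $L_1^{-1}$ multiplies by $i$) produces $(i/2)\Delta g_1\, e_1$, i.e.\ the free Schr\"odinger equation $2i\partial_t g_1+\Delta g_1=0$. Matching the initial datum with \eqref{g0-sch1} gives $g_1=\sqrt 2\,v$, so the first component of $U_0$ equals $e^{-it/\e^2}v+e^{it/\e^2}\bar v$.

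For Assumption \ref{ass-ps-tran} I would enumerate resonance sets: the only non-empty ones pair a simple branch $\lambda_\pm$ with the flat branch $\lambda_0\equiv 0$ at $p=\pm 1$, and each such $R_{j,j',p}$ reduces to the compact singleton $\{0\}$; all other channels satisfy \eqref{def-cjjp}. At $\xi=0$ one has $\Pi_\pm(0)B(e_p)\Pi_0(0)=0$ because every $V\in\ker A_0$ has $V_1=0$ and hence $B(e_p)V=0$. Standard first-order perturbation theory for the simple eigenvalues gives $\Pi_\pm(\xi)=\Pi_\pm(0)+O(|\xi|)$ and, by partition of unity, $\Pi_0(\xi)=\Pi_0(0)+O(|\xi|)$, so $|\Pi_\pm(\xi)B(e_p)\Pi_0(\xi)|\leq C|\xi|$ on a bounded neighbourhood of $0$, while the phase $|\sqrt{|\xi|^2+1}-1|\sim|\xi|^2/2$ there. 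Hence \eqref{sttrans-new} holds with $\a_{j,j',p}=1/2$; since every nontransparent channel involves the constant eigenvalue $\lambda_0$, Assumption \ref{ass-ps-tran-add} is also satisfied. Theorem \ref{thm:general} then supplies $U\in L^\infty([0,T/\e];H^{s-4})$ with $\|U-U_a\|_{L^\infty([0,T/\e];H^{s-4})}\leq C\e$, and extracting the first component delivers Theorem \ref{thm:main}. The delicate point is this transparency estimate at the degenerate resonance $\xi=0$: one has to verify that the vanishing of $\Pi_\pm(\xi)B(e_p)\Pi_0(\xi)$ at $\xi=0$ is only linear in $|\xi|$ and not higher order, which is what pins the transparency exponent at exactly $\a=1/2$ and the stability time at $O(1/\e)$ rather than longer.
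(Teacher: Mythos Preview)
Your plan coincides with the paper's proof: rewrite \eqref{0} as a first-order symmetric hyperbolic system in $U=(u,\e^2\d_t u,\e\nabla u)$, verify Assumptions \ref{ass-spec}--\ref{ass-ps-tran-add} with $\a=1/2$, and invoke Theorem \ref{thm:general}. The only structural difference is that you route the WKB construction through Proposition \ref{prop:Ua} and Conditions 1--4, whereas the paper runs the cascade explicitly and reads off the Schr\"odinger equation directly.

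Three points need tightening. First, your treatment of Condition 4 is circular: Condition 4 is a purely algebraic identity in $\pi_p$, $A(\xi)$, $L_p^{-1}$, $B$, and cannot be ``checked by evaluation on the constructed profile'' since Proposition \ref{prop:Ua} uses that very condition to produce the profile. Your remarks about $U_{0,0}\equiv U_{1,0}\equiv 0$ are really steps of the explicit cascade, not a verification of Condition 4; either verify the identity algebraically or just do the cascade (as the paper does). Second, the resonance enumeration is incomplete: the pairs $(j,j')=(0,\pm)$ with $p=\mp 1$ also have non-empty resonance sets $\{0\}$, and you only treat $\Pi_\pm B(e_p)\Pi_0$. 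These channels are in fact trivially transparent because $\Pi_0(\xi)B(\cdot,\cdot)\equiv 0$ for every $\xi$ (the image of $B$ lies along the $\e^2\d_t u$ coordinate, which is orthogonal to the zero eigenspace); the paper states this first, and you should too. Third, your closing ``delicate point'' is backwards: for the theorem you need the vanishing of $\Pi_\pm(\xi)B(e_p)\Pi_0(\xi)$ to be \emph{at least} linear in $|\xi|$, which gives $\a\geq 1/2$ and hence $\a_1=1$. Higher-order vanishing would only improve $\a$ and could not shorten the stability time, since $\a_1=\min\{2\a,1\}$ already saturates at $\a=1/2$.
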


The rest of this section is devoted to the proof of Theorem \ref{thm:main}.

\subsection{Reformulation of the equation}\label{sec:refo}

We rewrite the Klein-Gordon equation \eqref{0} as a symmetric hyperbolic system by introducing
\be\label{new-var}
U:=(w,v, u):=\left( \e\nabla^T u,\e^2\d_{t} u,u \right)^T:=\left(\e(\d_{x_1}u,\cdots, \d_{x_d} u),\e^2\d_t u,u\right)^T.\nn
\ee
Here the notation $\nabla:= (\d_{x_1},\cdots,\d_{x_d})^T$ is of column form. We remark that $0$ could be the scalar number zero, the zero column vector $0_d$, the zero row vector $0_d^T$ or the  zero matrix $0_{d\times d}$, but we will not specify if there is no confusion in the context.

Then the equation \eqref{0} is equivalent to
\be\label{000}
\d_t U+\frac{1}{\e}A(\d_x)U +\frac{1}{\e^2} A_0 U=F(U),
\ee
where
\be\label{def:AB}
A(\d_x):=-\bp 0 &\nabla & 0\\ \nabla^T &0&0\\0&0&0 \ep, \quad A_0:=\bp 0 &0 & 0\\ 0&0&1\\0&-1&0 \ep,\quad F(U)=\bp 0\\f(u)\\0\ep.
\ee

We consider in this paper the quadratic nonlinearity of the form $f(u)=\l\, u^2$ for some $\l >0$, we can write
\be\label{F-B}
F(U)=B(U,U)
\ee
with $B$ a symmetric bilinear form defined as
\be\label{def-B}\quad B(U_1,U_2)=-\l \bp 0\\u_1 u_2\\0\ep, \quad \mbox{for any }U_j=\bp w_j\\v_j\\u_j\ep, ~j\in\{1,2\}.\ee

Moreover, by  \eqref{ini-0} and the assumption in Theorem \ref{thm:main}, the initial datum is
\be\label{ini-data000}
U(0)=\left(\e \nabla^T u_{0,\e}, u_{1,\e}, u_{0,\e}\right)^T=\left(0, \psi_0, \phi_0 \right)^T+\e \left(\nabla^T (\phi_0+\phi_\e), \psi_{\e}, \phi_{\e}\right)^T.
\ee

Thus, we obtain a Cauchy problem \eqref{000}-\eqref{ini-data000} which has the form of \eqref{00}.

\subsection{Spectral decomposition}

We rewrite the linear differential operator on the left-hand side of \eqref{000} as
$$
\d_t +\frac{i}{\e^2}\left(A(\e D_x)+A_0/i\right),\quad D_x:=\d_x/i.
$$
The symbol of the semiclassical Fourier multiplier  $\left(A(\e D_x)+A_0/i\right)$ is
$$
A(\xi)+A_0/i
$$
which is a symmetric matrix for any $\xi\in\R^d$. Direct computation gives the following smooth spectral decomposition
\be\label{sp-A}
A(\xi)+A_0/i=\l_1(\xi) \Pi_1(\xi) +\l_2(\xi)\Pi_2(\xi)+\l_3(\xi) \Pi_3(\xi)
\ee
with the eigenvalues
\be\label{sp-A-eiv}
\l_1(\xi) =\sqrt{1+|\xi|^2}=\lxi,\quad \l_2(\xi) =-\sqrt{1+|\xi|^2}=-\lxi,\quad \l_3(\xi)\equiv 0
\ee
and eigenprojections
\be\label{sp-A-eip}
\Pi_j(\xi) =\frac{1}{2}\bp \frac{\xi\xi^T}{\l_j^2}&\frac{\xi}{\l_j}&\frac{-i\xi}{\l_j^2}\\\frac{\xi^T }{\l_j} & 1 & \frac{-i}{\l_j} \\ \frac{i\xi^T}{\l_j^2} & \frac{i}{\l_j} &\frac{1}{\l_j^2}\ep,\quad \Pi_3(\xi)=\frac{1}{d+|\xi|^2}\bp \Id_d &0 &-i\xi \\ 0&0&0\\ i\xi^T &0 & |\xi|^2 \ep,
\ee
where $j\in \{1,2\}$, $\xi=(\xi_1,\cdots,\xi_d)^T$ is a column vector and $\Id_d$ denotes the unit matrix of order $d$.  It is direct to check that $\l_{j}\in S^1$ and $\Pi_{j}\in S^0$ for any $j\in\{1,2,3\}.$  Clearly, we have $\l_j\in S^1,\ \Pi_j\in S^0,\ j\in\{1,2,3\}$. As a result, Assumption \ref{ass-spec} is satisfied.

According to \eqref{sp-A}, we can write
\be\label{sp-A-op-kg}
A(\e D_x)+A_0/i=\l_1(\e D_x) \Pi_1(\e D_x) +\l_2(\e D_x)\Pi_2(\e D_x).\nn
\ee

\subsection{WKB approximate solution}\label{sec:wkb-sol}

In this section, we use WKB expansion to construct an approximate solution to Cauchy problem \eqref{000}-\eqref{ini-data000}. Moreover, we will show that this approximate solution is global-in-time well defined and uniformly bounded.  As a result, Assumption \ref{ass-app} is verified.

\subsubsection{WKB cascade}\label{sec:wkb-1}

We make a formal power series expansion in $\e$ for the solution and each term in the series is a trigonometric polynomial in $\th:=-t/\e^2$:
\be\label{def-wkb}U_a = \sum_{n=0}^{K_a+1} \e^{n} { U}_{n}, \qquad { U}_{n} = \sum_{p \in \Z} e^{i p \th} U_{n,p}, \quad K_a \in \Z_+.\ee
 The amplitudes $U_{n,p}(t,x)$  are not highly-oscillating (independent of $\th$) and satisfy $U_{n,-p}=\overline U_{n,p}$  due to the reality of $U_a$.  We plug \eqref{def-wkb} into \eqref{000} and deduce the system of order $O(\e^n),\ n=-2,-1,0,1.$

We start from considering the equations in the terms of order $O(\e^{-2})$. We reproduce such equations as follows
\be\label{wkb--2}
(-ip+A_0)U_{0,p}=0,\quad \mbox{for all $p$}.
\ee
It is easy to find that $(-ip+A_0)$ are invertible except when $p\in {\cal H}_0:=\{-1,0,1\}$. We then deduce from \eqref{wkb--2} that
\be\label{U0p-p>2}
U_{0,p}=0,\quad \mbox {for all $p$ such that $|p|\geq 2$}.
\ee

We do not need to include the mean mode $U_{0,0}$ in the approximation. For simplicity, we take
\be\label{U00}
U_{0,0}=0.
\ee

For $p=1$, \eqref{wkb--2} is equivalent to the so called polarization condition $U_{0,p}\in \ker(ip+A)$. This implies
\be\label{intr-pola}
 U_{0,1}= g_{0} e_{1},\quad e_1:= (0_d^T,-i,1)^T, \quad g_0 \mbox{~is a scalar function}.
\ee

For $p=-1$, reality implies \be\label{U0-1}U_{0,-1}=\overline U_{0,1}=\bar g_0 e_{-1},\quad e_{-1}:=\bar e_1=(0_d^T,i,1)^T.\ee

\medskip

We continue to consider the equations in the terms of order $O(\e^{-1})$:
\be\label{wkb--1}
A(\d_x) U_{0,p}+(-ip+A_0) U_{1,p}=0, \quad \mbox{for all $p$}.
\ee

When $p=0$, by the choice of the leading mean mode in \eqref{U00}, equation \eqref{wkb--1} becomes
$$
A_0 U_{1,0}=0
$$
which is equivalent to
\be\label{U10}
U_{1,0}=(h_{1}^T,0,0)^T  \quad \mbox{for some vector valued function $h_1\in \R^d$}.
\ee

\smallskip

When $p=1$, by \eqref{intr-pola}, equation  \eqref{wkb--1} is equivalent to
\be\label{U11}
U_{1,1}=g_{1}e_1 +(\nabla^T g_{0},0,0)^T \quad \mbox{for some scalar function $g_1$}.
\ee

\smallskip

When $|p|\geq 2$, the invertibility of $(-ip+A_0)$ and \eqref{U0p-p>2} imply
\be\label{U1p-p>2}
U_{1,p}=0,\quad \mbox{for all $p$ such that $|p|\geq 2$}.
\ee

\medskip

The equations in the terms of order $O(\e^0)$ are as follows:
\be\label{wkb-0}
\d_t U_{0,p}+A(\d_x) U_{1,p}+(-ip+A_0) U_{2,p}=\sum_{p_1+p_2=p} B(U_{0,p_1},U_{0,p_2}), \quad \mbox{for all $p$}.
\ee

When $p=0$, by \eqref{def-B},  \eqref{U0p-p>2}--\eqref{U0-1}, equation \eqref{wkb-0} becomes
$$
A(\d_x) U_{1,0}+A_0 U_{2,0}=2 B(U_{0,1},U_{0,-1})=-2\l (0_d^T,|g_0|^2,0)^T
$$
which is equivalent to (by employing \eqref{def:AB} and \eqref{U10})
\be\label{U20}
U_{2,0}=(h_{2}^T,0, {\rm div}h_1-2\l |g_0|^2 )^T  \quad \mbox{for some vector valued function $h_2\in \R^d$}.
\ee

\smallskip

When $p=1$, by \eqref{def:AB}, \eqref{def-B}, \eqref{U0p-p>2} and \eqref{U00}, equation  \eqref{wkb-0} becomes
\be\label{wkb-0-U01}
\d_t U_{0,1}+A(\d_x) U_{1,1}+(-i+A_0) U_{2,1}=0.
\ee
By \eqref{intr-pola} and \eqref{U11}, equation \eqref{wkb-0-U01} is equivalent to
\be\label{wkb-0-U01-1}\left\{\begin{aligned}
&2i\d_t g_0+\Delta g_0=0,\\
&U_{2,1}=g_2 e_1 +(\nabla^T g_1,\d_t g_0,0)^T,\quad \mbox{for some scalar function $g_2$}.
\end{aligned}\right.
\ee
This is how we obtain the  linear Schr\"odinger equation \eqref{g0-sch1}. The initial datum of $g_0$
is determined in such a way that $U_{0}(0)=(0_d^T,\psi_0,\varphi_0)^T$ which is the leading term of initial data $U(0)$ (see \eqref{ini-data000}). This imposes
\be\label{wkb-ini-g0}
g_0(0)=\frac{\varphi_0+i \psi_0}{2}.
\ee

\smallskip

When $p=2$, by \eqref{def:AB}, \eqref{def-B}, \eqref{U0p-p>2}--\eqref{intr-pola},  \eqref{U1p-p>2}, equation \eqref{wkb-0} becomes
$$
(-2i+A_0) U_{2,2}= B(U_{0,1},U_{0,1})=-2\l (0_d^T,g_0^2,0)^T
$$
which is equivalent to
\be\label{U22}
U_{2,2}=\frac{\l}{3}\left(0_d^T,-2i g_0^2,g_0^2\right)^T.
\ee

\smallskip

When $|p|\geq 3$, equation \eqref{wkb-0} implies
\be\label{U2p-p>3}
U_{2,p}=0,\quad \mbox{for all $p$ such that $|p|\geq 3$}.\nn
\ee

\medskip

We finally consider the equations of order $O(\e)$:
\be\label{wkb-1}
\d_t U_{1,p}+A(\d_x) U_{2,p}+(-ip+A_0) U_{3,p}=2\sum_{p_1+p_2=p} B(U_{0,p_1},U_{1,p_2}), \quad \mbox{for all $p$}.
\ee

When $p=0$, by \eqref{def:AB},  \eqref{def-B},  \eqref{U0p-p>2}--\eqref{U0-1}, \eqref{U11}, \eqref{U1p-p>2},  equation \eqref{wkb-1} becomes
$$
\d_t U_{1,0}+A(\d_x) U_{2,0}+A_0 U_{3,0}=4\Re B(U_{0,1},U_{1,-1})=-4\l (0_d^T,\Re (g_0 \bar g_1),0)^T
$$
which is equivalent to (by \eqref{U10} and \eqref{U20})
\be\label{U30}
\d_t h_1=0,~ U_{3,0}=(h_{3}^T,0, {\rm div}h_2-4\l \Re(g_0\bar g_1))^T,
\ee
for some vector valued function $h_3\in \R^d$. The notation $\Re a$ stands for the real part of $a$.

Here we take a trivial solution $h_1=0$ to the equation $\d_t h_1 =0$ in \eqref{U30}. By \eqref{U10}, this means
\be\label{U10-0}
U_{1,0}=0.
\ee

\smallskip

When $p=1$, by  \eqref{def-B}, \eqref{U0p-p>2},  \eqref{U00}, \eqref{U1p-p>2} and \eqref{U10-0}, equation  \eqref{wkb-1} becomes
\be\label{wkb-0-U11}
\d_t U_{1,1}+A(\d_x) U_{2,1}+(-i+A_0) U_{3,1}=0\nn
\ee
which  is equivalent to
\be\label{wkb-0-U11-1}\left\{\begin{aligned}
&2i\d_t g_1+\Delta g_1=0,\\
&U_{3,1}=g_3 e_1 +(\nabla^T g_2,\d_t g_1,0)^T,\quad \mbox{for some scalar function $g_3$}.
\end{aligned}\right.\nn
\ee
Here we used \eqref{U11} and \eqref{wkb-0-U01-1}.

We find that $g_1$ satisfies the same linear Schr\"odinger equation as $g_0$. Since we do not need to include initial data of $g_1$ (this may be needed sometimes in order to have a better initial approximation), we will take a trivial solution $g_1=0$.

\smallskip

When $p=2$, by  \eqref{def:AB},  \eqref{def-B},  \eqref{intr-pola}, \eqref{U0p-p>2},   \eqref{U11}, \eqref{U1p-p>2} and \eqref{U10-0}, equation \eqref{wkb-1} becomes
$$
A(\d_x)U_{2,2}+(-2i+A_0) U_{3,2}= 2B(U_{0,1},U_{1,1})=-2\l (0_d^T,g_0g_1,0)^T
$$
which is equivalent to (by \eqref{U22})
\be\label{U32}
U_{3,2}=\frac{2\l}{3}\left(g_0 \nabla^T g_0,-2i g_0g_1,g_0g_1\right)^T.\nn
\ee

When $|p|\geq 3$, \eqref{wkb-1} is equivalent to
\be\label{U3p-p>3}
U_{3,p}=0,\quad \mbox{for all $p$ such that $|p|\geq 3$}.\nn
\ee

\subsubsection{WKB approximate solution}\label{sec:proof-prop1}
By \eqref{reg-ini}, we have $g_0(0)\in H^s$ with $s>d/2+4$. Then classically there exists a unique global-in-time solution $g_0$ to the Cauchy problem $\eqref{wkb-0-U01-1}_1$-\eqref{wkb-ini-g0} in Sobolev space $H^s$. Moreover, we have the estimate
\be\label{est-g0}
\|\d_t g_0\|_{L^\infty(0,\infty; H^{s-2})}\leq C \| g_0\|_{L^\infty(0,\infty; H^{s})}\leq C\|(\phi_0,\psi_0)\|_{H^s}.
\ee
To construct an approximate solution, we need to determine $g_j$ and $h_j$, $j\in \{1,2,3\}$, appeared in Section \ref{sec:wkb-1}. Taking
$$g_1=g_2=g_3=h_1=h_2=h_3=0$$
implies, by employing the argument in Section \ref{sec:wkb-1}, that
\ba\label{Ujp}
&U_{0,1}=g_0 e_1,\quad U_{1,1}=\bp \nabla g_{0}\\0\\0\ep,\quad U_{2,0}= -2\l \bp0_d\\0\\ |g_0|^2\ep, \\
& U_{2,1}= \bp0_d\\ \d_t g_0 \\0\ep,\quad U_{2,2}= \frac{\l}{3}\bp 0_d\\ -2i g_0^2\\ g_0^2\ep,\quad U_{3,2}= \bp g_0 \nabla g_{0}\\ 0 \\ 0 \ep,
\ea
and $U_{n,p}=0$ for all other $(n,p)\in  \Z^2,~p\geq 0$, and $U_{n,p}=\overline U_{n,-p}$ for $p<0$.

We observe that all the components in \eqref{Ujp} are determined by the leading amplitude $g_0$. By the estimate of $g_0$ in \eqref{est-g0}, we have for any $(n,p)\in \Z^2$:
\be\label{est-Unp}
U_{n,p}\in L^\infty(0,\infty;H^{s-2}),\quad \d_t U_{n,p} \in L^\infty(0,\infty;H^{s-4}).
\ee

Plugging all such $U_{n,p}$ into \eqref{def-wkb} gives an approximate solution $U_a$ of the form
\be\label{wkb-sol-010}
U_a=U_0+\e U_1 +\e^2 U_2+\e^3 U_3
\ee
 which solves the following Cauchy problem globally in time
 \be \label{wkb-eq1} \left\{ \begin{aligned} &\d_t U_a + \frac{1}{\e} A(\d_x) U_a + \frac{1}{\e^2} A_0 U_a = B(U_a,U_a)-\e^2 R^\e,\\
 & U_a(0)=(\e \nabla^T \varphi_0, \psi_0, \varphi_0)^T+\e^2 U_2(0)+\e^3 U_3(0),\end{aligned}\right.
 \ee
 where
 \ba\label{def-Re}
 R^\e:=2B(U_0,U_2)+ B(U_1,U_1)+2 \e B(U_1, U_2)+\e^2 B(U_2,U_2)\\
 - \sum_{n=2}^3 \e^{n-2}\sum_{p} e^{-ipt/\e^2} \d_t U_{n,p}-\sum_{p} e^{-ipt/\e^2} A(\d_x)U_{3,p}.
 \ea

It is direct to check:
\be\label{bound-R-Psi-kg}
\sup_{0<\e<1} \left(\|R^{\e}\|_{L^\infty\left(0,\infty; H^{s-4}\right)}+\|U(0)-U_a(0)\|_{ H^{s-4}}\right)<+\infty.
\ee

Recall $s>d/2+4$. Hence, by \eqref{est-Unp}-\eqref{bound-R-Psi-kg}, this approximate solution $U_a$ fulfills Assumption \ref{ass-app} for the Cauchy problem \eqref{000}-\eqref{ini-data000}.

\subsection{Partially strong transparency}\label{sec:very-KG}

By \eqref{def-B} and \eqref{sp-A-eip}, direct computation implies
\be\label{Pi3B}
\Pi_3(\xi)B(\cdot, \cdot)\equiv 0.\nn
\ee

By \eqref{sp-A-eiv}, we have
\be\label{res-kg1}
R_{j,j',p}=\{\xi\,:\, \l_j(\xi)-\l_{j'}(\xi)-p=0 \}\nn
\ee
are all empty sets except
\be\label{res-kg2}
R_{1,3,1}=\{\xi\,:\, \l_1(\xi)-1=0 \}=\{0\}, \quad R_{2,3,-1}=\{\xi\,:\, \l_2(\xi)+1=0\}=\{0\}.
\ee

Now we compute the interaction phases and the interaction coefficients corresponding to the non-empty resonance sets in \eqref{res-kg2}. On one hand, direct computation gives
\ba\label{inter-coe}
&\Pi_1(\xi) B(e_1)\Pi_{3}(\xi)=\frac{-\l}{2(d+|\xi|^2)}\bp \frac{i\xi\xi^T}{\l_1} &0 &\frac{\xi|\xi|^2}{\l_1} \\ \frac{i\xi^T}{\l_1} &0 &\frac{|\xi|^2}{\l_1}\\ \frac{-\xi^T}{\l_1} &0 &\frac{i|\xi|^2}{\l_1}\ep,\\
&\Pi_2(\xi) B(e_{-1})\Pi_{3}(\xi)=\frac{-\l}{2(d+|\xi|^2)}\bp \frac{i\xi\xi^T}{\l_2} &0 &\frac{\xi|\xi|^2}{\l_2} \\ \frac{i\xi^T}{\l_2} &0 &\frac{|\xi|^2}{\l_2}\\ \frac{-\xi^T}{\l_2} &0 &\frac{i|\xi|^2}{\l_2}\ep.
\ea
On the other hand, the interaction phases satisfy
\be\label{res-cal}
|\l_1(\xi)-1|^{-1}=|\l_2(\xi)-1|^{-1}=\frac{1}{\sqrt{1+|\xi|^2}-1}=\frac{\sqrt{1+|\xi|^2}+1}{|\xi|^2}.
\ee

We find that $|\Pi_1(\xi) B(e_1)\Pi_{3}(\xi)|\cdot |\l_1(\xi)-1|^{-1}$ and $|\Pi_2(\xi) B(e_{-1})\Pi_{3}(\xi)|\cdot |\l_2(\xi)+1|^{-1}$ are unbounded near resonance $\xi=0$. This implies that the strong transparency condition is not satisfied when $(i,j,p)=(1,3,1)$ or $(i,j,p)=(2,3,-1)$.

However, by \eqref{inter-coe} and \eqref{res-cal}, we can show that the following partially strong transparency condition is satisfied
\be\label{par-st-tr}
|\Pi_1(\xi) B(e_1)\Pi_3(\xi)|\leq C |\l_1(\xi)-1|^{1/2},\quad |\Pi_2(\xi) B(e_{-1})\Pi_3(\xi)|\leq C |\l_2(\xi)+1|^{1/2}.\nn
\ee

Thus,  Assumption \ref{ass-ps-tran} is satisfied with $\a=1/2$. Moreover, the eigenvalue $\l_3$ is identically zero, which shows that  Assumption \ref{ass-ps-tran-add} is also satisfied.

\subsection{Proof of Theorem \ref{thm:main}}

All the assumptions introduced in Section \ref{sec:ass-res} are verified for \eqref{000}-\eqref{ini-data000}. By applying Theorem \ref{thm:general}, we obtain
\begin{theo}\label{thm1} There exists $\e_0>0$ such that for any $0<\e<\e_0$, the Cauchy problem \eqref{000}-\eqref{ini-data000} admits a unique solution $U\in L^\infty\left(0,\frac{T}{\e}; H^{s-4}\right)$
for some $T>0$ independent of $\e$. Moreover, there holds
\be\label{error-est1}
\|U-U_a\|_{L^\infty\left(0,\frac{T}{\e}; H^{s-4}\right)}\leq C\, \e,\nn
\ee
where $U_a$ is the approximate solution obtained in Section \ref{sec:proof-prop1}.
\end{theo}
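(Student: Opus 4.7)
\bigskip

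\textbf{Proof Proposal for Theorem \ref{thm1}.} The plan is to verify that the reformulated Cauchy problem \eqref{000}-\eqref{ini-data000} satisfies all four assumptions listed in Section \ref{sec:ass-res} and then invoke Theorem \ref{thm:general} directly, with the exponent $\alpha = 1/2$ identified in Section \ref{sec:very-KG}. Once the assumptions are in place, the theorem yields existence and uniqueness over a time interval of length $T_2/\e^{\alpha_1}$ in $H^{s-4}$ with the quantitative error estimate \eqref{est-error2}; since $\alpha=1/2$, one has $\alpha_1 = \min\{2\alpha,1\}=1$, so the long time scale is exactly $T/\e$ as required.

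First I would simply gather the ingredients already prepared in the preceding subsections. Assumption \ref{ass-spec} follows from the explicit spectral decomposition \eqref{sp-A}-\eqref{sp-A-eip} together with the remark that $\lambda_j \in S^1$ and $\Pi_j \in S^0$ for $j\in\{1,2,3\}$. Assumption \ref{ass-app} is supplied by the WKB solution constructed in \eqref{wkb-sol-010}-\eqref{def-Re}: the required global-in-time existence and $H^{s-4}$ regularity of each $U_{n,p}$ reduce, via the closed-form expressions \eqref{Ujp}, to the global well-posedness and Sobolev bound \eqref{est-g0} for the linear Schr\"odinger equation $\eqref{wkb-0-U01-1}_1$-\eqref{wkb-ini-g0}, and the remainder control \eqref{bound-R-Psi-kg} is then immediate from bilinearity of $B$ and the estimate \eqref{est-Unp}. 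The initial-data compatibility in Assumption \ref{ass-app} follows by comparing $U_a(0)$ in \eqref{wkb-eq1} with $U(0)$ in \eqref{ini-data000}, using the uniform boundedness of $(\varphi_\e,\psi_\e,\e\nabla\varphi_\e)$ in $(H^{s-4})^{d+2}$ from \eqref{reg-ini}.

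Next I would invoke the partially strong transparency analysis of Section \ref{sec:very-KG}: the only non-empty resonance sets are $R_{1,3,1}=R_{2,3,-1}=\{0\}$, both compact, and the explicit computations \eqref{inter-coe}-\eqref{res-cal} show $\alpha_{j,j',p}=1/2$ for these two triples while the remaining interaction coefficients satisfy the strong condition trivially (their resonance sets are empty, so Assumption \ref{ass-ps-tran} holds with $\alpha_{j,j',p}=1$ and the uniform lower bound \eqref{def-cjjp} follows from the smoothness and behavior at infinity of $\lambda_j-\lambda_{j'}-p$). Thus $\alpha=\min_{j,j',p}\alpha_{j,j',p}=1/2$. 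Finally, Assumption \ref{ass-ps-tran-add} is satisfied because in each non-strongly-transparent triple $(1,3,1)$ and $(2,3,-1)$ one of the two eigenvalues involved is $\lambda_3\equiv 0$, which is identically constant.

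With all four assumptions verified, Theorem \ref{thm:general} applies and produces a unique solution $U\in L^\infty(0,T_2/\e^{\alpha_1};H^{s-4})$ with $\alpha_1=1$ and error $\|U-U_a\|_{L^\infty(0,T/\e;H^{s-4})}\leq C\e$. I do not anticipate a genuine obstacle at the level of the theorem: the verifications are all carried out in the excerpt, and the only delicate point is the bookkeeping on Sobolev exponents (the loss of four derivatives is dictated by the time derivatives of the WKB amplitudes $\d_t U_{n,p}\in H^{s-4}$ appearing in $R^\e$, which forces the assumption $s>d/2+4$ in Theorem \ref{thm:main}). The Schr\"odinger approximation statement of Theorem \ref{thm:main} then follows by isolating the leading oscillatory part $e^{-it/\e^2}U_{0,1}+e^{it/\e^2}U_{0,-1}$ in $U_a$ and projecting onto the third component $u$, using $e_1=(0_d^T,-i,1)^T$ so that the $u$-component of $U_{0,1}$ is exactly the Schr\"odinger solution $g_0=v$.
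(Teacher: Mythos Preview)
Your proposal is correct and follows exactly the paper's approach: the paper's own proof of Theorem \ref{thm1} consists of the single sentence ``All the assumptions introduced in Section \ref{sec:ass-res} are verified for \eqref{000}-\eqref{ini-data000}. By applying Theorem \ref{thm:general}, we obtain [the result].'' Your write-up simply spells out this verification in more detail, correctly identifying $\alpha=1/2$, $\alpha_1=\min\{2\alpha,1\}=1$, and the reason for the $H^{s-4}$ regularity.
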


 Theorem \ref{thm:main} is a direct corollary of Theorem \ref{thm1}.


\begin{section}*{Acknowledgements}
\thanks{The authors thank Professor Weizhu Bao for helpful discussions. The first author acknowledges the support of the project
LL1202 in the programme ERC-CZ funded by the Ministry of Education, Youth and Sports of the Czech Republic.  Z. Zhang was partially supported by NSF of China under Grant 11371039 and 11425103.}
\end{section}







\end{document}